\newtheorem{theorem}{Theorem}[section]
\newtheorem{lemma}[theorem]{Lemma}
\theoremstyle{definition}
\newtheorem{definition}[theorem]{Definition}
\theoremstyle{remark}
\newtheorem{remark}[theorem]{Remark}
\theoremstyle{assumption}
\theoremstyle{proposition}
\newtheorem{proposition}[theorem]{Proposition}
\numberwithin{equation}{section}
\newcommand{\dom}[1]{\mathrm{\bf dom}\,{(#1)}} 
\newcommand{\intset}[1]{\mathrm{\bf int}\,{(#1)}} 
\newcommand{\prox}{\mathrm{\bf prox}} 
\newcommand{\Bprox}{\mathrm{\bf Bprox}} 
\def\nn{\nonumber}
\def\B{\mathscr{B}}
\def\R{{\mathbb R}}
\def\L{{\mathscr L}}
\def\X{\mathbb {X}}
\def\Y{\mathbb{Y}}
\def\bbf{{\bm{f}}}
\def\bx{{\bm x}}
\def\bA{{\bm{A}}}
\begin{document}
\graphicspath{{./FIG/},{./PIC/}}

\title[A Triple-Bregman Balanced Primal-Dual Algorithm]{A Triple-Bregman Balanced Primal-Dual Algorithm for Saddle Point Problems}


\author{Jintao Yu}
\address{Department of Mathematics and Statistics, Ningbo University, Ningbo, 315211, China.}
\curraddr{}
\email{yujintao0045@163.com}
\thanks{}

\author{Hongjin He}
\address{Department of Mathematics and Statistics, Ningbo University, Ningbo, 315211, China.}
\curraddr{}
\email{hehongjin@nbu.edu.cn}
\thanks{H. He was supported in part by National Natural Science Foundation of China (No. 12371303) and Zhejiang Provincial Natural Science Foundation of China (No. LZ24A010001). }
\thanks{H. He is the corresponding author.}
\subjclass[2020]{65K05; 65K10; 90C25.}

\date{}

\dedicatory{}

\begin{abstract}
The \textbf{p}rimal-\textbf{d}ual \textbf{h}ybrid \textbf{g}radient (PDHG) method is one of the most popular algorithms for solving saddle point problems. However, when applying the PDHG method and its many variants to some real-world models commonly encountered in signal processing, imaging sciences, and statistical learning, there often exists an imbalance between the two subproblems, with the dual subproblem typically being easier to solve than the primal one. In this paper, we propose a flexible \textbf{t}riple-\textbf{B}regman balanced primal-\textbf{d}ual \textbf{a}lgorithm (TBDA) to solve a class of (not necessarily smooth) convex-concave saddle point problems with a bilinear coupling term. Specifically, our TBDA mainly consists of two dual subproblems and one primal subproblem. Moreover, three Bregman proximal terms, each one with an individual Bregman kernel function, are embedded into the respective subproblems. In this way, it effectively enables us to strike a practical balance between the primal and dual subproblems. More interestingly, it provides us a flexible algorithmic framework to understand some existing iterative schemes and to produce customized structure-exploiting algorithms for applications. Theoretically, we first establish the global convergence and ergodic convergence rate of the TBDA under some mild conditions. In particular, our TBDA allows larger step sizes than the PDHG method under appropriate parameter settings. Then, when the requirements on objective functions are further strengthened, we accordingly introduce two improved versions with better convergence rates than the original TBDA. Some numerical experiments on synthetic and real datasets demonstrate that our TBDA performs better than the PDHG method and some other efficient variants in practice.
\end{abstract}

\keywords{Primal-dual algorithm; saddle point problem; Bregman distance; convex optimization; augmented Lagrangian method.}

\maketitle

\section{Introduction}\label{sec1}
In this paper, we are concerned with a class of (not necessarily smooth) convex-concave saddle point problems, in which a bilinear term couples the primal and dual variables. Mathematically, the problem under consideration reads as
\begin{equation}\label{sdp}
\min_{x \in \mathbb{X}}\max_{y \in \mathbb{Y}} ~\L(x,y) := f(x) +\langle Ax, y \rangle  - g(y),
\end{equation}
where $\mathbb{X}\subseteq \R^{n}$ and $\mathbb{Y}\subseteq \R^{m}$ are two nonempty, closed and convex sets, $f: \mathbb{X} \rightarrow (-\infty, \infty]$ and $g: \mathbb{Y} \rightarrow (-\infty, \infty]$ are proper closed convex functions, $A: \mathbb{X} \rightarrow \mathbb{Y}$ is a bounded linear operator, $\langle \cdot, \cdot \rangle$ represents the standard inner product with induced Euclidean norm $\|\cdot\| \equiv \sqrt{\langle \cdot, \cdot \rangle}$ for vectors. Under certain conditions, \eqref{sdp} reduces to the well-studied composite optimization problem:
\begin{equation}\label{ccp}
\min_{x \in \mathbb{X}} f(x)  + g^*(Ax)
\end{equation}
where  $g^*$ corresponds to the conjugate function of $g$. Also, it provides an equivalent way to handle the following linearly constrained convex minimization problem:
\begin{equation}\label{LCOP}
\min_{x \in \mathbb{X}} \left\{f(x)\; | \; Ax=b\right\},
\end{equation}
where $b \in \R^m$ is a given vector. Therefore, although
we will restrict the later discussion to the case of \eqref{sdp} with vector variables, all of our results are
applicable to the case with matrix variables. In recent years, we have observed the successful and diverse applications of saddle point problems in the fields of game theory, signal processing, imaging sciences, machine learning, and statistical learning, e.g., see \cite{CP16,CKCH23,EZC10,KP15,RHLNSH20,Suh22,Val21,ZC08b}, to name a few.

In the literature, one of the most popular solvers for \eqref{sdp} is the first-order primal-dual algorithm \cite{CP11,CP16b}, which is also named as the primal-dual hybrid gradient (PDHG) method in the imaging community \cite{BR12,EZC10,HMY17,ZC08b}. Given the $k$-th iterate $(x^k,y^k)$, the iterative scheme of the PDHG method \cite{CP11} reads as
\begin{numcases}{\label{pdhg}}
x^{k+1} = \arg\min_{x\in \mathbb{X}}\left\{ f(x) + \langle Ax, y^{k}\rangle + \frac{\mu}{2} \| x-x^{k} \|^{2} \right\}, \nonumber\\
\tilde{x}^{k+1} = x^{k+1} + \sigma(x^{k+1}-x^{k}), \\
y^{k+1} = \arg\max_{y\in \mathbb{Y}}\left\{-g(y) + \langle A \tilde{x}^{k+1}, y\rangle - \frac{\gamma}{2} \| y-y^{k} \|^{2} \right\}, \nonumber
\end{numcases}
where  $\mu > 0$ and $\gamma > 0$ are proximal parameters serving as step sizes, and $\sigma \in [0,1]$ is an extrapolation parameter. Particularly, \eqref{pdhg} reduces to the Arrow-Hurwicz Primal-Dual (AHPD) method \cite{AHH58} when taking $\sigma=0$. Unfortunately, the AHPD method is not necessarily convergent with fixed $\mu$ and $\gamma$ under the standard convexity conditions of $f$ and $g$ \cite{HXY22}, even it also has global convergence and sublinear convergence rate under relatively strong assumptions \cite{EZC10,HXY22,HYY15,MSMC15,NO09}. Comparatively, the version with $\sigma=1$ requires weaker convergence-guaranteeing conditions than the AHPD method.
Specifically, it was documented in \cite{CP11,CP16b} that the PDHG method equipped with $\sigma = 1$ is globally convergent under generic convex conditions as long as
\begin{equation}\label{condition}
\|A^\top A \|< \mu\gamma,
\end{equation}
where $\|A A^\top \|$ is the spectrum of $AA^\top$. Actually, as reported in \cite{CHX13,HY12b}, the extrapolation parameter $\sigma$ plays a crucial role in weakening the convergence-guaranteeing condition and accelerating the PDHG method. Therefore, He and Yuan \cite{HY12b} enlarged the range of $\sigma$ as $\sigma \in [-1,1]$ with the help of an extra correction step. Later, some more improved variants of the PDHG method with relaxed step sizes and extrapolation parameter were developed in recent papers, e.g., see \cite{BCYZ25,CHX13,CY21,HMXY22,JZH23,LY24,LY21,MCJH23,WH20}. Furthermore, some researchers introduced dynamic strategy to adjust the step sizes in \cite{BR11,BR14,CT22,CYZ22,MP18}. 
As shown in \cite{HHYY14}, when $\mathbb{X}$ and/or $\mathbb{Y}$ are not the whole spaces, the $x$- and/or $y$-subproblems in \eqref{pdhg} possibly lose their closed-form solutions. In this case, it usually takes much computing time to find approximate solutions of the subproblems. Therefore, some inexact variants of the PDHG method (e.g., see \cite{JCWH21,JWCZ21,RC20,WYH23}) were introduced for the purpose of reducing the computational cost of exactly solving subproblems.

When applying the PDHG method \eqref{pdhg} and its variants to some specific models (e.g., \eqref{LCOP} and its concrete forms in Section \ref{Sec:numexp}), it is noteworthy that the dual subproblem often is easier than its primal subproblem in the sense that the dual one has lower computational cost. Interestingly, as shown in \cite{HY21}, the augmented Lagrangian method (ALM, which is also a primal-dual method) can be greatly improved by using a so-named balancing strategy to increase the computational cost of the dual subproblem (which corresponds to the Lagrangian multiplier). 
To some extend, the newly introduced balancing strategy in \cite{HY21} can be regarded as one of the ``dual stabilization'' techniques mentioned in \cite{IS15} to speed up primal-dual algorithms for solving \eqref{LCOP}. More recently, He et al. \cite{HWY23} proposed a so-called symmetric primal-dual algorithm (SPIDA) for \eqref{sdp}, where they introduced another type balancing strategy by calculating the dual subproblem twice to achieve a balance between the primal and dual subproblems. Actually, their SPIDA shares the similar idea used in the alternating extragradient method \cite{BR11,BR14} and the primal-dual fixed point algorithm \cite{CHZ16}. We can easily observe from the reported experiments in these papers that one more computation on the dual subproblem is also able to improve the performance of primal-dual algorithms on solving \eqref{sdp} (including \eqref{ccp} and \eqref{LCOP}). Therefore, these results also encourage us to further design efficient algorithms for \eqref{sdp}, with a focus on balancing the complexity of the primal and dual subproblems.

In this paper, we are particularly interested in \eqref{sdp} with the case that the dual subproblem is assumed to be easier than the primal one. Accordingly, we design a \textbf{t}riple-\textbf{B}regman primal-\textbf{d}ual \textbf{a}lgorithm (TBDA), which is able to exploit the unbalanced structure to achieve a flexible balance between the primal and dual problems. Specifically, we first follow the symmetric spirit used in \cite{HWY23} to update the dual variable twice, where the first one is only employed in the update of the primal variable. Here, we should emphasize that all subproblems involve Bregman proximal terms and each one has an individual Bregman kernel function, which makes our algorithm flexible to achieve a friendly balance between the primal and dual problems. Under appropriate parameter settings, our TBDA allows larger stepsizes than the PDHG method for the primal and dual subproblem. Furthermore, we incorporate an extrapolation step used in \eqref{pdhg} to enhance our TBDA, yielding improved numerical performance and a better theoretical convergence rate. In this way, the proposed TBDA enjoys a versatile algorithmic framework to produce some customized algorithms, and in particular to cover the iterative schemes of some classical first-order methods, including the (linearized and balanced) ALM and SPIDA. Under mild conditions, we show theoretically that our TBDA is globally convergence and exhibits an ergodic convergence rate of $O({1}/{N})$, where $N$ is the iteration counter. When $f$ or/and $g$ are further assumed to be strongly convex relative to their Bregman kernel functions, we introduce two improved versions of the original TBDA, and one of them  achieves a promising convergence rate of $O(1/\omega^{N})$, where $\omega>1$ is a given algorithmic constant. Finally, we apply our algorithm to quadratic optimization problems and RPCA. Some numerical results clearly show that our TBDA works better than the classical PDHG method and some other efficient variants.

This paper is divided into six sections. In Section \ref{Sec2}, we  summarize some basic concepts and definitions that will be used in the theoretical analysis. In Section \ref{Sec3}, we first introduce the algorithmic framework of the new TBDA. Then, we discuss the connection between the TBDA and several other first-order algorithms. Under appropriate conditions, we establish some convergence results of the TBDA. In Section \ref{Sec4}, we propose two improved versions of the TBDA under some stronger conditions. To support the idea of this paper, we report some computational results in Section \ref{Sec:numexp}. Finally, we complete this paper with some concluding remarks in Section \ref{Sec:conclusion}.

\section{Preliminaries}\label{Sec2}
This section summarizes some notations, definitions, and inequalities that will be used in subsequent analysis.

Let $\R^n$ be an $n$-dimensional Euclidean space. The superscript $\top$ denotes the transpose of vectors and matrices. For a given $x\in \mathbb{R}^n$, we denote $\| x\|_1:=\sum_{i=1}^{n}|x_i|$ as the $\ell_1$ norm of $x$, where $x_i$ is the $i$-th component of vector $x$. For a symmetric and positive definite matrix $G$ (denoted by $G\succ0$), we define $\langle x, y \rangle_{G} := \langle x, Gy \rangle$, where $x ,y \in \R^{n}$. Consequently, we further denote $\| x \|_{G} :=\sqrt{\langle x, Gx \rangle}$ as the $G$-norm of $x \in \R^{n}$. Given a matrix $A:=(a_{ij})_{m\times n}\in \mathbb{R}^{m\times n}$, we denote its nuclear norm by $\|A\|_{*}:= \sum_{i=1}^{\min\{m,n\}} \sigma_i(A)$, where $\sigma_i(A)$  is the $i$-th largest singular value of $A$.  Moreover, we use $\|A\|$ to represent the spectrum of $A$, which refers to the square root of the maximum eigenvalue of $A^\top A$.

\begin{definition}\label{def:lsc}
	Let $f:\R^n \to  \R \cup \{\infty\}$ be an extended real-valued function, and denote the domain of $f$ by
$\dom{f} := \left\{x\in \R^n\;|\;f(x) <\infty\right\}$. Then, we say that function $f$ is
\begin{enumerate}			
	\item[\rm (i)] proper if $f(x)  >-\infty$ for all $x \in \R^n$ and $\dom{f}\neq \emptyset$.
	\item[\rm (ii)] convex if $f\left(tx+(1-t)y\right)\leq t f(x) +(1-t)f(y)$ for any $x,y\in \dom{f}$ and $t\in (0,1)$.
	\item[\rm (iii)] $\varrho$-strongly convex with a given $\varrho>0$ if $\dom{f}$ is convex and the following inequality holds for any $x,y\in \dom{f}$ and $t\in(0,1)$:
	$$f(tx + (1-t)y)\leq t f(x) + (1-t)f(y) - \frac{\varrho}{2}t(1-t)\|x-y\|^2.$$
\end{enumerate}
\end{definition}

Given a proper closed convex function $f$, the subdifferential of $f$ at $x \in \dom{f}$ is defined by 
$$\partial f(x)= \left\{\; \xi\,|\, f(y) \geq f(x) + \left\langle y-x, \xi\right\rangle, \,\forall \, y \in \dom{f}  \;\right\}.$$
In what follows, we denote $\dom{\partial f}:=\{x\in\R^n\;|\;\partial f(x)\neq \emptyset\}$.

Let $\phi : \mathbb{R}^n \to \mathbb{R}$ be a strictly convex function, finite at $x$ and $y$, and differentiable at $y$. Then, the Bregman distance \cite{Bre66} between $x$ and $y$ associated with the kernel function $\phi$ is defined as
\begin{equation*}
\B_{\phi}(x,y) = \phi(x) - \phi(y) - \langle \nabla \phi(y),x-y \rangle,
\end{equation*}
where $\nabla \phi(y)$ represents the gradient of $\phi$ at $y \in \R^{n}$. For simplicity, we here refer the reader to \cite{BBT17,BSTV18} for some widely used Bregman kernel functions. It is easy to see that the Bregman distance covers the standard Euclidean distance as its special case when $\phi(\cdot)=\frac{1}{2}\|\cdot\|^2$. However, the Bregman distance does not always share the symmetry and the triangle inequality property with the Euclidean distance. Below, we recall two fundamental properties of the Bregman distance \cite{Beck17} that will be useful in the convergence analysis.

\begin{proposition}[{\cite[Proposition 2.3]{BBC03}}]
	Let $x\in\dom{\phi}$ and $a,b\in \intset{\dom{\partial\phi}}$. Then, we have
	\begin{itemize}
		\item[\rm (i)] $\B_{\phi}(a,b) + \B_{\phi}(b,a) = \langle  \nabla\phi(a)-\nabla\phi(b), a-b  \rangle$.
		\item[\rm (ii)] $\B_{\phi}(x,a) - \B_{\phi}(x,b) - \B_{\phi}(b,a) = \langle \nabla\phi(b)-\nabla\phi(a), x-b \rangle$.
	\end{itemize}
\end{proposition}
Note that $\intset{\mathbb{X}}$ denotes the interior of $\mathbb{X}$. In the case where $\mathbb{X}$ has no interior, we can instead use the relative interior of $\mathbb{X}$.
According to \cite{LFN18}, we define the strong convexity of $f$ relative to $\psi$.

\begin{definition}\label{def:RSC}
	Let $f:\R^n \to \R$, and let $\psi:\R^n \to \R$ be a differentiable convex function. We say that the function $f$ is $\varrho$-strongly convex relative to the function $\psi$ on $\dom{\psi}$, if for any $x,y \in \intset{\dom {\psi}}$ and $\nu \in \partial f(x)$, there exists a scalar $\varrho \geq 0$ for which
	\begin{equation*}
	f(y) \ge f(x) + \langle \nu , y-x \rangle + \varrho \B_{\psi}(y,x) .
	\end{equation*}
\end{definition}

Given a proper closed convex function $f : \mathbb{X} \to (-\infty, \infty]$, the proximal operator of $f$ (see \cite{Mor62,PB13}), denoted by $\prox_{\lambda f}(\cdot)$, is given by
\begin{equation*}
\prox_{\lambda f}(a) = \arg\min_{x \in \R^n } \left\{f(x) + \frac{1}{2\lambda }\|x-a\|^2 \right\},\quad \lambda>0,\;\; a\in\R^n.
\end{equation*}
With the definition of Bregman distance, we define the so-named Bregman proximity operator \cite{BCN06} associated with $\phi$ as follows:
\begin{equation*}
\Bprox_{\lambda f}^\phi (a) = \arg\min_{x \in \R^n } \left\{f(x) + \frac{1}{\lambda}\B_{\phi}(x,a) \right\},\quad \lambda>0,\;\; a\in\R^n,
\end{equation*}
which, by denoting $x^\star \equiv \Bprox_{\lambda f}^\phi (a)$, implies the following important property:
\begin{equation*}
	\frac{1}{\lambda}\langle  \nabla \phi(x^\star) - \nabla \phi(a) ,x - x^\star \rangle \ge f(x^\star) - f(x), \quad \forall x \in \mathbb{X}, 
\end{equation*}
or equivalently,
\begin{equation*}
\frac{1}{\lambda} \left(\B_{\phi}(x,a) - \B_{\phi}(x,x^\star) - \B_{\phi}(x^\star,a) \right) \ge f(x^\star) - f(x),\quad \forall x \in \mathbb{X}.
\end{equation*}

Below, we present the first-order optimality condition of \eqref{sdp}. The pair $(\widehat{x}, \widehat{y})\in \X  \times \Y$ is called a saddle point of \eqref{sdp} if it satisfies the following inequalities
\begin{equation*}
\L (\widehat{x}, y) \leq \L(\widehat{x}, \widehat{y}) \leq \L(x, \widehat{y}), \quad \forall\, x \in \X , \; \forall\, y \in \Y,
\end{equation*}
which means that
\begin{equation}\label{KKT}
\left\{
\begin{aligned}
&\mathcal{P}(x):= \L(x,\widehat{y}) - \L(\widehat{x},\widehat{y}) =f(x)-f(\widehat{x}) + \left\langle x-\widehat{x}, A^\top \widehat{y} \right\rangle  \geq 0, \;\; \forall x\in \mathbb{X}, \\
&\mathcal{D}(y):=\L(\widehat{x},\widehat{y})-\L(\widehat{x},y) =g(y)-g(\widehat{y}) + \left\langle y-\widehat{y}, -A\widehat{x} \right\rangle  \geq 0, \;\; \forall y\in \mathbb{Y}.
\end{aligned}\right.	
\end{equation}
In what follows, the primal–dual gap is defined by
\begin{equation}
\mathcal{G}(x,y): = \mathcal{P}(x) + \mathcal{D}(y) \ge 0 .  \nonumber
\end{equation}
It is not difficult to see that $\mathcal{P}(\cdot)$, $\mathcal{D}(\cdot)$ and $\mathcal{G}(\cdot,\cdot)$ in above are convex functions.

\section{The Triple-Bregman Balanced Primal-Dual Algorithm}\label{Sec3}

In this section, we first introduce the basic framework of the triple-Bregman balanced primal-dual algorithm (TBDA) for  \eqref{sdp}. Then, we discuss its connections with some existing first-order algorithms for \eqref{sdp} and \eqref{LCOP}. Finally, we show its global convergence under some mild conditions.

\subsection{Algorithmic framework: TBDA}
Recall the assumption that the saddle point problem \eqref{sdp} has an unbalanced structure in the sense that the primal subproblem is more difficult than the dual one. Therefore, we first follow the ways used in \cite{BR11,BR14,HWY23} to make a prediction $\tilde{y}^{k+1}$ on the dual variable $y$. Then, we update the primal one $x^{k+1}$ by absorbing the  $\tilde{y}^{k+1}$. Hereafter, we follow the extrapolation step in the PDHG method \eqref{pdhg} to generate $\bar{x}^{k+1}$ for numerical acceleration. Finally, we update the dual variable with the information of $\bar{x}^{k+1}$ and $y^k$. Note that the three primal and dual subproblems are equipped with Bregman proximal terms, which make our algorithm flexible to balance the computational cost of updating primal and dual variables (see Sections \ref{Sec5} and \ref{Sec:numexp}). The details of the algorithm are summarized in Algorithm \ref{alg1}.

\begin{algorithm}[!htbp]
	\caption{The Triple-Bregman Balanced Primal-Dual Algorithm for \eqref{sdp}.}\label{alg1}
	\begin{algorithmic}[1]
		\STATE Choose starting points $x^{0} \in \mathbb{X}$ and $y^{0} \in \mathbb{Y}$, and parameters $\sigma$, $\gamma$, $\mu$ and $\tau > 0$.
		\REPEAT
		\STATE Compute $\tilde{y}^{k+1}$, $x^{k+1}$, $\bar{x}^{k+1}$ and $y^{k+1}$, respectively, via
		\begin{align}
		&\tilde{y}^{k+1} = \arg\min_{y\in \mathbb{Y}}\left\{ g(y) - \langle A  x^{k}, y\rangle + \gamma   \B_{\phi}(y,y^{k}) \right\}, \label{a}  \\
		&x^{k+1} = \arg\min_{x\in \mathbb{X}}\left\{ f(x) + \langle Ax, \tilde{y}^{k+1} \rangle +  \mu   \B_{\psi}(x,x^{k}) \right\} \label{b},  \\
		&\bar{x}^{k+1} = x^{k+1} + \sigma (x^{k+1} - x^{k}), \label{c} \\
		&y^{k+1} = \arg\min_{y\in \mathbb{Y}}\left\{ g(y) - \langle A \bar{x}^{k+1}, y\rangle +  \tau   \B_{\varphi}(y,y^{k}) \right\}. \label{d}
		\end{align} 
		\UNTIL some stopping criterion is satisfied.
		\RETURN an approximate saddle point $(\hat{x},\hat{y})$. 
	\end{algorithmic}
\end{algorithm}

It has been documented in \cite{HXY22,HYY15} that the AHPD algorithm (i.e., taking $\sigma=0$ in \eqref{pdhg}) is not necessarily convergent in practice. Here, we consider a toy example constructed in \cite{HXY22} to illustrate the faster behavior of Algorithm \ref{alg1} than the PDHG method (with setting $\sigma=1$) and SPIDA \cite{HWY23}. Specifically, we consider the following linear programming problem: 
	\begin{equation}\label{Problem2}
	\min_{x_{1},x_{2}} \left\{\;  2x_{1} + x_{2} \;|\; x_{1} +x_{2} =1,\; x_{1} \ge 0 , \; x_{2} \ge 0\; \right\},
	\end{equation}
whose the dual form is $\max_{y} \left\{\;  y\;|\; y \le 1,\; y \le 2\; \right\}$.
	It is clear that the optimal solution of \eqref{Problem2} is  $(x_{1}^{\star},x_{2}^{\star})=(0,1)$ and $y^{\star}=1$ is the optimal solution of the dual problem. When applying Algorithm \ref{alg1} to \eqref{Problem2}, by taking parameters $\sigma=1$ and $\gamma = \mu = \tau $ and setting the Bregman kernel function as $\phi(w) = \psi(w) = \varphi(w)=\frac{1}{2}\|w\|^2$ for simplicity, the specific iterative schemes of Algorithm \ref{alg1} become
	\begin{equation}\label{tbda-lp}
		\begin{cases}
		\tilde{y}^{k+1} =y^{k} - \frac{1}{\gamma} \left( x^{k}_{1} + x^{k}_{2} -1   \right),  \\
		x^{k+1}_{1} = \max \left\{ \left(- \frac{2}{\gamma} + \frac{1}{\gamma} \tilde{y}^{k+1} +x_{1}^{k}  \right),0  \right\},   \\
		x^{k+1}_{2} = \max \left\{ \left(- \frac{1}{\gamma} + \frac{1}{\gamma} \tilde{y}^{k+1} +x_{2}^{k}  \right),0  \right\},    \\
		\bar{x}^{k+1}_{1} = 2 x^{k+1}_{1} - x^{k}_{1},  \\
		\bar{x}^{k+1}_{2} = 2 x^{k+1}_{2} - x^{k}_{2}, \\
		y^{k+1} = y^{k} - \frac{1}{\gamma} \left( \bar{x}^{k+1}_{1} + \bar{x}^{k+1}_{2} - 1 \right). 
		\end{cases}
	\end{equation}
	We can see from \eqref{tbda-lp} that the update of the dual variable (i.e., $\tilde{y}^{k+1}$ and $y^{k+1}$) is simpler than the primal variable $x^{k+1}$ without projection operations (i.e., $\max\{\cdot,0\}$).
	Now, we consider two different groups of proximal parameters to conduct the performance of PDHG, SPIDA, and TBDA. All the algorithms start with $(x^0_1, x^0_2, y^0) = (0, 0, 0)$. In Figure \ref{figure1}, the first row plots the convergence trajectories of the three algorithms by setting $\mu=\gamma= \frac{2\sqrt{6}}{3}$ for PDHG and SPIDA and $\mu=\gamma=\tau = \frac{2\sqrt{6}}{3}$ for TBDA. The plots in the second row show the convergence trajectories of the three algorithms by setting $\mu=\gamma= \frac{10\sqrt{6}}{3}$ for PDHG and SPIDA and $\mu=\gamma=\tau = \frac{10\sqrt{6}}{3}$ for TBDA. It is clearly demonstrated from Figure \ref{figure1} that our TBDA runs faster than PDHG and SPIDA for the toy example \eqref{Problem2}, which, to some extent, confirms that our proposed TBDA exhibits superior numerical performance compared to PDHG and SPIDA.

\begin{figure}[!htbp]
	\subfigure[PDHG $(\gamma = \mu = \frac{2\sqrt{6}}{3})$]{
		\begin{minipage}[t]{0.31\linewidth}
			\centering
			\includegraphics[width=1\textwidth]{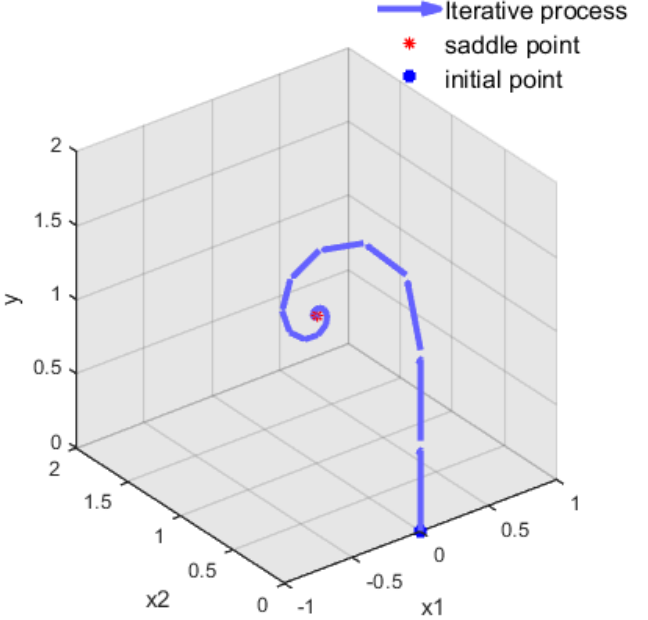}
		\end{minipage}%
	}
	\subfigure[SPIDA $(\gamma = \mu = \frac{2\sqrt{6}}{3})$]{
		\begin{minipage}[t]{0.31\linewidth}
			\centering
			\includegraphics[width=1\textwidth]{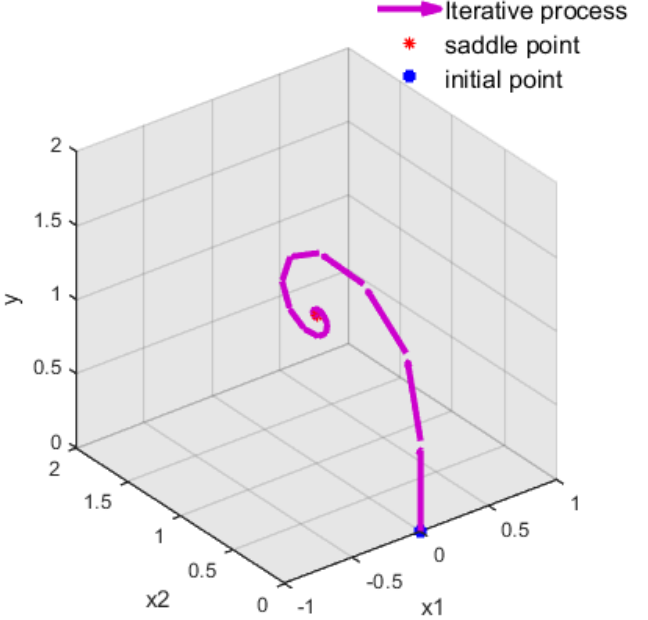}
		\end{minipage}
	}
	\subfigure[TBDA $(\gamma = \mu = \tau = \frac{2\sqrt{6}}{3})$]{
		\begin{minipage}[t]{0.31\linewidth}
			\centering
			\includegraphics[width=1\textwidth]{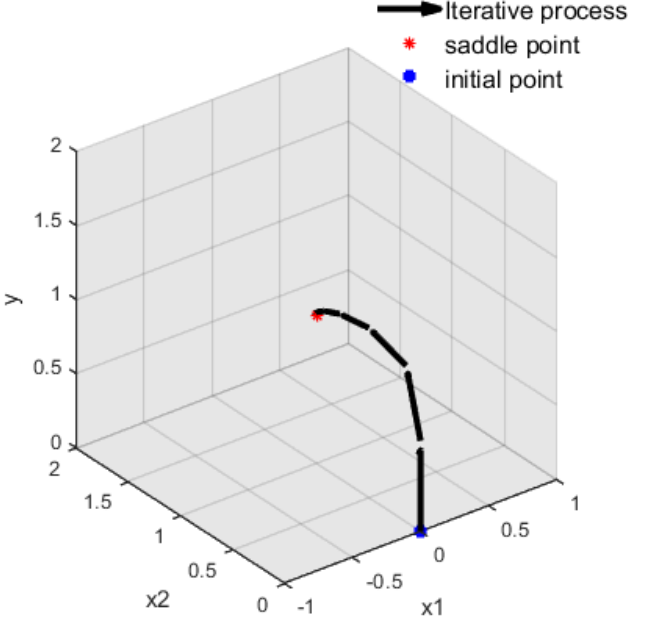}
		\end{minipage}
	}
	
	\subfigure[PDHG $(\gamma = \mu = \frac{10\sqrt{6}}{3})$]{
		\begin{minipage}[t]{0.31\linewidth}
			\centering
			\includegraphics[width=1\textwidth]{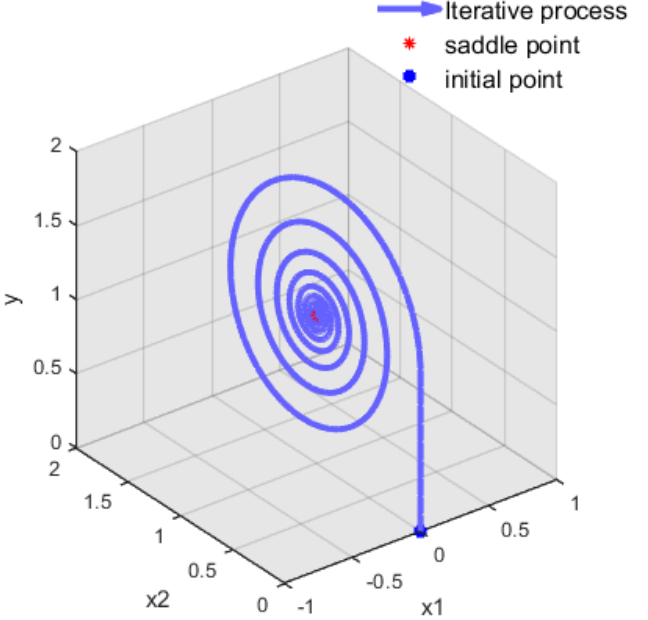}
		\end{minipage}%
	}
	\subfigure[SPIDA $(\gamma = \mu = \frac{10\sqrt{6}}{3})$]{
		\begin{minipage}[t]{0.31\linewidth}
			\centering
			\includegraphics[width=1\textwidth]{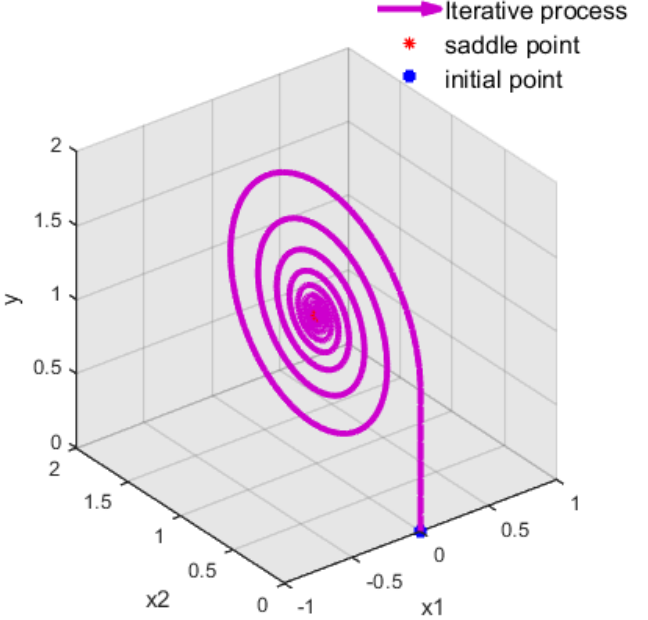}
		\end{minipage}
	}
	\subfigure[TBDA $(\gamma = \mu = \tau = \frac{10\sqrt{6}}{3})$]{
		\begin{minipage}[t]{0.31\linewidth} 
			\centering
			\includegraphics[width=1\textwidth]{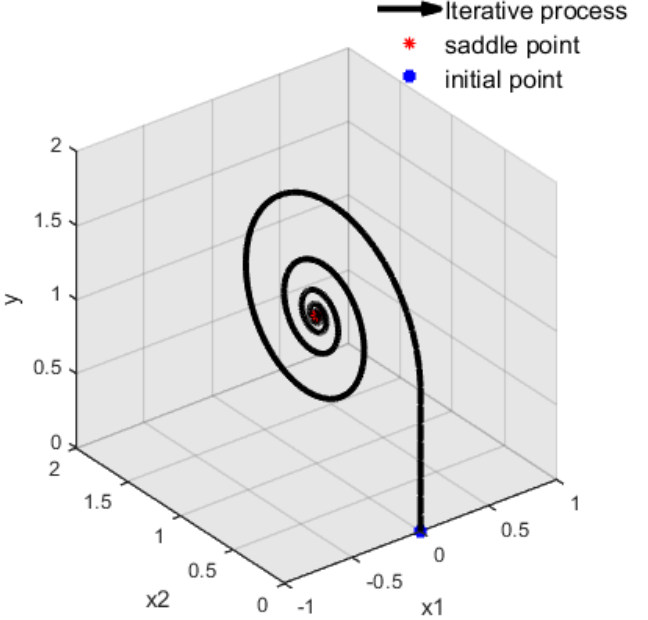}
		\end{minipage}
	}
	\caption{Illustration of the convergence behaviors of PDHG, SPIDA and TBDA with different proximal parameters for solving \eqref{Problem2}.}
	\label{figure1}
\end{figure}

\subsection{Understanding some first-order methods by TBDA}\label{Sec5}

In this subsection, by choosing the Bregman kernel functions and the extrapolation parameter, we demonstrate the versatility of TBDA to produce many variants. Particularly, we focus on applying TBDA to the linearly constrained convex optimization problem \eqref{LCOP} for understanding the iterative schemes of some first-order methods in the framework of our TBDA. 

Firstly, it is clear that our TBDA immediately reduces to the SPIDA \cite{HWY23} by removing the extrapolation step \eqref{c} and taking the same Bregman kernel function in \eqref{a} and \eqref{d}. Below, we discuss its connections with some other first-order algorithms for the linearly constrained convex optimization problem \eqref{LCOP}. We now formulate \eqref{LCOP} as a saddle point problem:
\begin{equation}\label{sdp-lcp}
\min_{x\in \mathbb{X}}\max_{y \in \R^m} \L(x,y) := f(x) + \langle Ax,y \rangle -\langle b,y \rangle,
\end{equation}
which is a special case of \eqref{sdp} with specifying $g(y)=\langle b,y\rangle$. Consequently, applying TBDA to \eqref{sdp-lcp} yields 
\begin{equation}\label{tbda-lcp}
\left\{
\begin{aligned}
&\tilde{y}^{k+1} = \arg\min_{y\in \R^{n}}\left\{ \langle b,y \rangle - \langle A  x^{k}, y\rangle + \gamma   \B_{\phi}(y,y^{k}) \right\},  \\
&x^{k+1} = \arg\min_{x\in \mathbb{X}}\left\{ f(x) + \langle Ax, \tilde{y}^{k+1} \rangle +  \mu   \B_{\psi}(x,x^{k}) \right\},  \\
&\bar{x}^{k+1} = x^{k+1} + \sigma (x^{k+1} - x^{k}) , \\
&y^{k+1} = \arg\min_{y\in \R^{m}}\left\{ \langle b,y \rangle - \langle A \bar{x}^{k+1}, y\rangle +  \tau   \B_{\varphi}(y,y^{k}) \right\}.
\end{aligned}\right.
\end{equation} 
 
Hereafter, we illustrate the versatility of \eqref{tbda-lcp} to produce first-order algorithms for \eqref{LCOP}.

\subsubsection{Understanding the augmented Lagrangian method (ALM)}\label{Sec:ALM} 

With the iterative scheme \eqref{tbda-lcp}, we take the Bregman kernel functions as $\phi(y)= \varphi(y) = \frac{1}{2} \| y \|^{2} $ and $\psi(x)=  \frac{1}{2} \| x \|_{A^\top A}^{2}$. Then, the Bregman proximal terms in \eqref{tbda-lcp} immediately are specified as:
$$\B_{\phi}(y,y^{k}) = \B_{\varphi}(y,y^{k}) = \frac{1}{2} \| y - y^{k} \|^{2} \quad\text{and}\quad \B_{\psi}(x,x^{k}) = \frac{1}{2} \| x - x^{k} \|_{A^\top A}^{2} .$$
Consequently, \eqref{tbda-lcp} becomes
\begin{equation}\label{tbda-lcp-2}
\left\{
\begin{aligned}
&\tilde{y}^{k+1} = \arg\min_{y\in \R^{n}}\left\{ \langle b,y \rangle - \langle A  x^{k}, y\rangle + \frac{\gamma}{2} \| y - y^{k} \|^{2} \right\},  \\
&x^{k+1} = \arg\min_{x\in \mathbb{X}}\left\{ f(x) + \langle Ax, \tilde{y}^{k+1} \rangle +  \frac{\mu}{2} \| x - x^{k} \|_{A^\top A}^{2} \right\},  \\
&\bar{x}^{k+1} = x^{k+1} + \sigma (x^{k+1} - x^{k}),  \\
&y^{k+1} = \arg\min_{y\in \R^{m}}\left\{ \langle b,y \rangle - \langle A \bar{x}^{k+1}, y\rangle +  \frac{\tau}{2}  \| y - y^{k} \|^{2} \right\}.
\end{aligned}\right.
\end{equation}  
Note that both $y$-subproblems are unconstrained optimization problems. Therefore, by invoking the first-order optimality conditions, \eqref{tbda-lcp-2} can be further simplified as
\begin{equation}\label{tbda-lcp-3}
\left\{
\begin{aligned}
&\tilde{y}^{k+1} = y^{k} + \frac{1}{\gamma}  \left(A x^{k} - b\right),\\
&x^{k+1} = \arg\min_{x\in \mathbb{X}}\left\{ f(x) +  \frac{\mu}{2} \| Ax - Ax^{k} + \frac{1}{\mu}  \tilde{y}^{k+1} \|^{2} \right\},  \\
&\bar{x}^{k+1} = x^{k+1} + \sigma (x^{k+1} - x^{k}) , \\
&y^{k+1} = y^{k} + \frac{1}{\tau}  \left(A \bar{x}^{k+1} - b \right).
\end{aligned}\right.
\end{equation}  
Observing that $\tilde{y}^{k+1}$ essentially serves as an intermediate iterate, we plug it into the update of $x^{k+1}$ with setting $\mu=\frac{1}{\gamma}$. Then,  \eqref{tbda-lcp-3} reads as 
\begin{equation}\label{tbda-lcp-4}
\left\{
\begin{aligned}
&x^{k+1} = \arg\min_{x\in \mathbb{X}}\left\{ f(x) +  \frac{1}{2\gamma} \left\| Ax - b + \gamma y^k \right\|^{2} \right\},  \\
&\bar{x}^{k+1} = x^{k+1} + \sigma (x^{k+1} - x^{k}) , \\
&y^{k+1} = y^{k} + \frac{1}{\tau}  (A \bar{x}^{k+1} - b ).
\end{aligned}\right.
\end{equation}  
As a result, when taking $\gamma = \tau$ and $\sigma =0$ in \eqref{tbda-lcp-4}, we obtain the iterative scheme of the standard ALM. Additionally,  \eqref{tbda-lcp-4} produces some other variants by setting $\gamma \neq \tau$ and $\sigma \neq 0$. 


\subsubsection{Understanding the linearized ALM}\label{Sec:LALM}
With the discussions in Section \ref{Sec:ALM}, we only change the Bregman kernel function $\psi$ in the $x$-subproblem as $\psi(x) = \frac{1}{2} \|x \|^{2} $. Then, we specify \eqref{tbda-lcp} as
\begin{equation}\label{tbda-lalm}
\left\{
\begin{aligned}
&\tilde{y}^{k+1} = y^{k} +  \frac{1}{\gamma} (A x^{k} - b ), \\
&x^{k+1} = \arg\min_{x\in \mathbb{X}}\left\{ f(x) +  \frac{\mu}{2} \| x - x^{k} + \frac{1}{\mu} A^{\top} \tilde{y}^{k+1} \|^{2} \right\},  \\
&\bar{x}^{k+1} = x^{k+1} + \sigma (x^{k+1} - x^{k}),  \\
&y^{k+1} = y^{k} + \frac{1}{\tau} (A \bar{x}^{k+1} - b ).
\end{aligned}\right.
\end{equation}  
As a consequence, when taking $\gamma= \tau$ and $\sigma = 0$, the iterative scheme \eqref{tbda-lalm} recovers the linearized ALM. Of course, we can also take $\gamma \neq \tau$ and $\sigma \neq 0$ to produce some other variants of \eqref{tbda-lalm}.

\subsubsection{Understanding the balanced ALM}\label{Sec:BALM}

Unlike the choices discussed in Sections \ref{Sec:ALM} and \ref{Sec:LALM}, we take two different Bregman kernel functions for the two $y$-subproblems. Specifically, we set $\phi(y)= \frac{1}{2} \| y \|^{2} $, $\varphi(y) = \frac{1}{2} \| y \|^{2}_{(AA^\top +\kappa I)}$, and $\psi(x)=  \frac{1}{2} \| x \|^2$, where $\kappa>0$ and $I$ stands for an identity matrix which always automatically
matches the size for operations. Then, we have
\begin{equation}\label{tbda-balm}
\left\{
\begin{aligned}
&\tilde{y}^{k+1} = y^{k} + \frac{1}{\gamma} (A x^{k} - b ), \\
&x^{k+1} = \arg\min_{x\in \mathbb{X}}\left\{ f(x) +  \frac{\mu}{2} \| x - x^{k} + \frac{1}{\mu} A^{\top} \tilde{y}^{k+1} \|^{2} \right\},  \\
&\bar{x}^{k+1} = x^{k+1} + \sigma (x^{k+1} - x^{k}),  \\
&y^{k+1} = \arg\min_{y\in \R^{m}}\left\{ \langle b,y \rangle - \langle A \bar{x}^{k+1}, y\rangle +  \frac{\tau}{2}  \left\| y - y^{k} \right\|^{2}_{(AA^\top +\kappa I)} \right\},
\end{aligned}\right.
\end{equation}  
where the last $y$-subproblem amounts to
$$y^{k+1} = y^k + \frac{1}{\tau} (AA^\top +\kappa I)^{-1} (A\bar{x}^{k+1}-b). $$
Consequently, by plugging $\tilde{y}^{k+1}$ into the update of $x^{k+1}$ and taking $\mu=\frac{1}{\gamma}$ and $\sigma=\tau=1$, we immediately obtain the balanced ALM introduced in \cite{HY21}. Moreover, when taking $\phi(y)=\varphi(y) = \frac{1}{2} \| y \|^{2}_{(AA^\top +\kappa I)}$ and $\sigma=0$, we can recover the doubly balanced ALM mentioned in \cite{HWY23}. Also, we can produce some extrapolated variants with $\sigma\neq 0$.

\subsubsection{Understanding ALM-based multi-block splitting methods}\label{Sec:MALM}
In recent years, the multi-block structure frequently appears in those models arising from imaging sciences and statistical learning. Here, we consider a multi-block form of the saddle point problem \eqref{sdp}, i.e.,
\begin{equation}\label{multi-sdp1}
\min_{x_{i}\in\mathbb{X}_{i}}\max_{y \in \R^m} \left\{ \sum_{i=1}^{p}f_{i}(x_{i}) + \left\langle  \sum_{i=1}^{p}A_{i}x_{i} ,y \right\rangle -g(y) \right\},
\end{equation}
where each $f_{i} : \mathbb{X}_{i} \rightarrow (-\infty,+\infty]$ is a proper closed convex function and $A_i\in \R^{m\times n_i}$ is a given matrix. In particular, when $g(y)=\langle b,y\rangle$, the problem \eqref{multi-sdp1} is equivalent to the multi-block linearly constrained convex optimization:
\begin{equation}\label{MLCP}
	\min_{x_i\in\mathbb{X}_i}\left\{\;\sum_{i=1}^{p}f_{i}(x_{i})\;\Big{|}\; \sum_{i=1}^{p}A_{i}x_{i}=b\;\right\}.
\end{equation}
By denoting $\bx:=(x_1^\top,x_2^\top,\ldots,x_p^\top)^\top$, $\bbf(\bx):=\sum_{i=1}^{p}f_{i}(x_{i}) $, $\bA:=[A_1,A_2,\ldots,A_p]$, and $\mathbb{X}:=\mathbb{X}_1\times \mathbb{X}_2\times\ldots\times \mathbb{X}_p$, we can rewrite  \eqref{multi-sdp1} as the following compact form:
\begin{equation}\label{multi-sdp2}
\min_{\bx \in \mathbb{X}}\max_{y \in \R^m} \L(\bx,y) := \bbf(\bx) + \left\langle  \bA\bx,y \right\rangle -g(y) .
\end{equation}
Consequently, applying Algorithm \ref{alg1} to \eqref{multi-sdp2} and using the separability of objective function, we have
\begin{equation}\label{PD-Full-Jacobian}
\left\{ \begin{aligned}
\tilde{y}^{k+1} &= \arg\min_{y\in \R^m}\left\{ g(y) - \left\langle \sum_{i=1}^{p}A_{i}x_{i}^k, y\right\rangle + \gamma  \B_\phi(y,y^k) \right\},  \\
x_i^{k+1} &= \arg\min_{x_i\in \mathbb{X}_i}\left\{ f(x_{i}) + \left\langle A_{i}x_{i}, \tilde{y}^{k+1} \right\rangle + \mu \B_{\psi_{i}}(x_{i},x_{i}^k) \right\},  \; i=1,\ldots,p,  \\
\bar{x}_{i}^{k+1} & = x_i^{k+1} + \sigma (x_i^{k+1} - x_i^{k}), \; i=1,\ldots,p,  \\
y^{k+1} &= \arg\min_{y\in \R^m}\left\{ g(y) - \left\langle \sum_{i=1}^{p}A_{i}\bar{x}_{i}^{k+1}, y\right\rangle + \tau \B_\varphi(y,y^k) \right\}.
\end{aligned}\right.
\end{equation}
It is noteworthy that all $x_i$-subproblems are separable. Then, we can implement \eqref{PD-Full-Jacobian} in a parallel way. When setting $\phi(y)=\varphi(y)$ and $\sigma=0$, it has been shown in \cite{HWY23} that \eqref{PD-Full-Jacobian} recovers the iterative schemes of many ALM-based multi-block splitting methods, e.g., \cite{HHX14,HYZ14,HHY15,HXY16,WDH15}. More generally, we can take $\phi(y)\neq \varphi(y)$ and $\sigma \neq 0$ to generate more implementable variants for \eqref{MLCP}. In Section \ref{Sec:numexp}, we will demonstrate the extrapolated step $\bar{x}_i^{k+1}$ with $\sigma\neq 0$ is able to improve the performance of \eqref{PD-Full-Jacobian}.

\subsection{Convergence analysis}
In this subsection, we demonstrate that the sequence generated by Algorithm \ref{alg1} converges to a solution of \eqref{sdp} under relatively mild conditions.

\begin{lemma}\label{le1}
	Let $\{(\hat{x},\hat{y})\}$ be a saddle point of \eqref{sdp}, and let $\{(\tilde{y}^{k},x^{k}, y^{k})\}$ be a sequence generated by Algorithm \ref{alg1}. Then, we have 
	\begin{align}\label{eq12}
	&\tau \B_{\varphi}(\hat{y},y^{k+1}) + \mu\B_{\psi}(\hat{x},x^{k+1})+  \mathcal{D}(\tilde{y}^{k+1}) + (\sigma+1)\mathcal{P}(x^{k+1})    \\
	&\le  \tau \B_{\varphi}(\hat{y},y^{k}) + \mu\B_{\psi}(\hat{x},x^{k}) + \sigma \mathcal{P}(x^{k}) - \left(\tau\B_{\varphi}(y^{k+1},y^{k}) - \gamma \B_{\phi}(y^{k+1},y^{k})\right)   \nonumber   \\  
    &\quad - \left( \mu(1+\sigma) \B_{\psi}(x^{k+1},x^{k}) + \mu\sigma\B_{\psi}(x^{k},x^{k+1})     \right. \nonumber \\
    &\qquad\;\;  \left. + \gamma  \B_{\phi}(y^{k+1},\tilde{y}^{k+1}) - (1+\sigma) \left\langle  A (x^{k+1} - x^{k}) ,y^{k+1} - \tilde{y}^{k+1} \right\rangle \right). \nonumber 
	\end{align}
\end{lemma}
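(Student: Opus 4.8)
The plan is to derive \eqref{eq12} by summing four suitably weighted instances of the first-order optimality conditions of the three subproblems \eqref{a}, \eqref{b}, \eqref{d}, each written through the Bregman proximity operator property recalled in Section~\ref{Sec2}. Applying that property to \eqref{a} with $\lambda=1/\gamma$, to \eqref{b} with $\lambda=1/\mu$, and to \eqref{d} with $\lambda=1/\tau$ yields, for every $x\in\mathbb{X}$ and $y\in\mathbb{Y}$, the three inequalities
\begin{align*}
&g(\tilde{y}^{k+1})-g(y)-\langle Ax^{k},\tilde{y}^{k+1}-y\rangle \le \gamma\bigl(\B_{\phi}(y,y^{k})-\B_{\phi}(y,\tilde{y}^{k+1})-\B_{\phi}(\tilde{y}^{k+1},y^{k})\bigr),\\
&f(x^{k+1})-f(x)+\langle A(x^{k+1}-x),\tilde{y}^{k+1}\rangle \le \mu\bigl(\B_{\psi}(x,x^{k})-\B_{\psi}(x,x^{k+1})-\B_{\psi}(x^{k+1},x^{k})\bigr),\\
&g(y^{k+1})-g(y)-\langle A\bar{x}^{k+1},y^{k+1}-y\rangle \le \tau\bigl(\B_{\varphi}(y,y^{k})-\B_{\varphi}(y,y^{k+1})-\B_{\varphi}(y^{k+1},y^{k})\bigr),
\end{align*}
which I label (A), (B), (D), respectively.

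Next I would instantiate these at carefully chosen points so that the gap quantities $\mathcal{P}$ and $\mathcal{D}$ in \eqref{KKT} emerge with the correct coefficients. Concretely, I take (B) at $x=\hat{x}$ with weight $1$ and again at $x=x^{k}$ with weight $\sigma$, take (A) at $y=y^{k+1}$, and take (D) at $y=\hat{y}$; the choice $y=y^{k+1}$ (rather than $\hat y$) in (A) is essential, since it produces exactly the kernel terms $\B_{\phi}(y^{k+1},y^{k})$ and $\B_{\phi}(y^{k+1},\tilde{y}^{k+1})$ of the claim while avoiding any $\B_{\phi}(\hat y,\cdot)$ term, which does not occur in \eqref{eq12}. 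Using $f(x^{k+1})-f(\hat{x})=\mathcal{P}(x^{k+1})-\langle A(x^{k+1}-\hat{x}),\hat{y}\rangle$, the weighted sum of the two (B)-instances collapses neatly: the $f$-differences combine to $(1+\sigma)\mathcal{P}(x^{k+1})-\sigma\mathcal{P}(x^{k})$, the $\psi$-kernel terms reproduce exactly $\mu\B_{\psi}(\hat{x},x^{k})-\mu\B_{\psi}(\hat{x},x^{k+1})-\mu(1+\sigma)\B_{\psi}(x^{k+1},x^{k})-\mu\sigma\B_{\psi}(x^{k},x^{k+1})$, and the bilinear pieces assemble into $\langle A(\bar{x}^{k+1}-\hat{x}),\tilde{y}^{k+1}-\hat{y}\rangle$ by virtue of the identity $(x^{k+1}-\hat{x})+\sigma(x^{k+1}-x^{k})=\bar{x}^{k+1}-\hat{x}$ from \eqref{c}. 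Likewise, adding (A) at $y^{k+1}$ and (D) at $\hat{y}$ and invoking the definition of $\mathcal{D}$, the $g$-values telescope into $\mathcal{D}(\tilde{y}^{k+1})$ together with the bilinear remainder $\langle A(x^{k}-\hat{x}),y^{k+1}-\tilde{y}^{k+1}\rangle-\langle A(\bar{x}^{k+1}-\hat{x}),y^{k+1}-\hat{y}\rangle$.

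The decisive step is then to collect all three surviving bilinear terms and show they simplify to a single product. Grouping the two contributions carrying the factor $A(\bar{x}^{k+1}-\hat{x})$ and cancelling $\hat{x}$ gives $\langle A(\bar{x}^{k+1}-x^{k}),\tilde{y}^{k+1}-y^{k+1}\rangle$, and the extrapolation identity $\bar{x}^{k+1}-x^{k}=(1+\sigma)(x^{k+1}-x^{k})$ reduces everything to $-(1+\sigma)\langle A(x^{k+1}-x^{k}),y^{k+1}-\tilde{y}^{k+1}\rangle$, precisely the bilinear term in the claim. I anticipate this cross-term bookkeeping---tracking signs and which iterate $\bar{x}^{k+1}$ expands to---to be the main obstacle, since it is where an error would most easily creep in.

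Finally I would rearrange the resulting estimate, moving the $\B_{\varphi}(\hat{y},y^{k+1})$, $\B_{\psi}(\hat{x},x^{k+1})$ and the $\sigma\mathcal{P}(x^{k})$ term across the inequality so that the left-hand side matches the claim. Comparing with the asserted right-hand side, the only discrepancy is an extra summand $-\gamma\B_{\phi}(\tilde{y}^{k+1},y^{k})$ produced by (A); since $\phi$ is convex we have $\B_{\phi}(\tilde{y}^{k+1},y^{k})\ge 0$, so discarding this nonpositive term only enlarges the right-hand side and yields exactly \eqref{eq12}. Throughout I would use $\sigma\ge 0$ to preserve the direction of the inequality when scaling (B) by $\sigma$, and the nonnegativity of the Bregman distance as the sole relaxation; notably, no strong convexity or step-size condition is required for this lemma.
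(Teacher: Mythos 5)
Your proposal is correct and follows essentially the same route as the paper's proof: you use identical test points ($y=y^{k+1}$ in the first dual optimality inequality, $y=\hat y$ in the second, and $x=\hat x$ with weight $1$ plus $x=x^{k}$ with weight $\sigma$ in the primal one) and the same bilinear-term bookkeeping culminating in the cancellation via $\bar{x}^{k+1}-x^{k}=(1+\sigma)(x^{k+1}-x^{k})$; the only cosmetic difference is that you invoke the three-point (Bregman proximity) inequality at the outset, whereas the paper writes the optimality conditions in gradient form and converts them to Bregman-distance differences at the end. Your closing observation---that the argument actually produces the stronger bound containing the additional subtracted term $\gamma\B_{\phi}(\tilde{y}^{k+1},y^{k})$, which one may discard (being nonnegative, its removal only enlarges the right-hand side) to arrive at \eqref{eq12} exactly as stated---is handled more explicitly by you than by the paper, whose concluding display retains that term without comment.
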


\begin{proof}
First, by invoking the first-order optimality conditions of  \eqref{a}, \eqref{b} and \eqref{d}, it holds, for all $y\in\mathbb{Y}$ and $x\in\mathbb{X}$, that
\begin{align}
	& \left\langle \gamma \left(\nabla \phi(\tilde{y}^{k+1}) - \nabla \phi(y^{k})\right) -  A x^{k} ,y-\tilde{y}^{k+1} \right\rangle \ge g(\tilde{y}^{k+1}) - g(y),   \label{eq1} \\
	& 	\left\langle \tau \left(\nabla \varphi(y^{k+1}) - \nabla \varphi(y^{k})\right) -  A \bar{x}^{k+1} ,y-y^{k+1} \right\rangle \ge g(y^{k+1}) - g(y),  \label{eq2} \\
	& \left\langle \mu  \left(\nabla \psi(x^{k+1}) - \nabla \psi(x^{k})\right) +  A^\top \tilde{y}^{k+1} ,x- x^{k+1} \right\rangle \ge f(x^{k+1}) - f(x).\label{eq3}
\end{align}
	Due to the arbitrariness of $y$ in  \eqref{eq1} and  \eqref{eq2}, we let $y:=y^{k+1} $ in \eqref{eq1} and $y:=\hat{y}$ in \eqref{eq2}, respectively. Then, we arrive at
	\begin{equation}\label{eq4}
	\left\langle \gamma \left(\nabla \phi(\tilde{y}^{k+1}) - \nabla \phi(y^{k})\right) -  A x^{k} ,y^{k+1} - \tilde{y}^{k+1} \right\rangle \ge g(\tilde{y}^{k+1}) - g(y^{k+1})
	\end{equation}
	and
	\begin{equation}\label{eq5}
	\left\langle \tau \left(\nabla \varphi(y^{k+1}) - \nabla \varphi(y^{k})\right) -  A \bar{x}^{k+1} ,\hat{y}-y^{k+1} \right\rangle \ge g(y^{k+1}) - g(\hat{y}).
	\end{equation}
	By adding \eqref{eq4} and \eqref{eq5}, we have
	\begin{align}\label{eq6}
	&\gamma \left\langle \nabla \phi(\tilde{y}^{k+1}) - \nabla \phi(y^{k}),y^{k+1}-\tilde{y}^{k+1}  \right\rangle + \tau \left\langle \nabla \varphi(y^{k+1}) - \nabla \varphi(y^{k}), \hat{y}-y^{k+1} \right\rangle  \\
	&\hskip 1cm - \left\langle  A \bar{x}^{k+1} ,\hat{y} - \tilde{y}^{k+1} \right\rangle + \left\langle   A (x^{k} - \bar{x}^{k+1}) ,\tilde{y}^{k+1} -y^{k+1} \right\rangle \nonumber \\
	&	\ge  g(\tilde{y}^{k+1}) - g(\hat{y}). \nonumber
	\end{align}
	Similarly, by the arbitrariness of $x$ in \eqref{eq3}, 
	we take $x:=\hat{x}$ and let $x:= x^{k}$ with multiplying both sides by $\sigma$. Then, we have
	\begin{equation}\label{eq13}
	\left\langle \mu ( \nabla \psi(x^{k+1}) - \nabla \psi(x^{k})) + A^\top \tilde{y}^{k+1}, \hat{x}- x^{k+1} \right\rangle \ge f(x^{k+1}) - f(\hat{x})
	\end{equation}
	and
	\begin{align}\label{eq14}
	&\left\langle \mu  \left(\nabla \psi(x^{k+1}) - \nabla \psi(x^{k})\right) +  A^\top \tilde{y}^{k+1} ,\sigma( x^{k}- x^{k+1}) \right\rangle  \\
	&\hskip 6.5cm \ge  \sigma  \left(f(x^{k+1}) - f(x^{k})\right).\nn
	\end{align}
	Combining \eqref{eq6}, \eqref{eq13} and \eqref{eq14}, we have
	\begin{align}\label{eq15}
	&\gamma \langle \nabla \phi(\tilde{y}^{k+1}) - \nabla \phi(y^{k}),y^{k+1}-\tilde{y}^{k+1}  \rangle  + \tau \langle \nabla \varphi(y^{k+1}) - \nabla \varphi(y^{k}) ,\hat{y}-y^{k+1} \rangle  \\
	&\quad  + \mu \langle   \nabla \psi(x^{k+1}) - \nabla \psi(x^{k}) , \hat{x}- x^{k+1} \rangle + \langle  A (x^{k} - \bar{x}^{k+1}) ,\tilde{y}^{k+1} -y^{k+1} \rangle  \nonumber \\
	& \quad+ \mu \langle   \nabla \psi(x^{k+1}) - \nabla \psi(x^{k}) , \sigma( x^{k}- x^{k+1}) \rangle - \langle  A \bar{x}^{k+1} ,\hat{y} - \tilde{y}^{k+1} \rangle  \nonumber \\
	&\quad + \langle  A^\top \tilde{y}^{k+1}, \hat{x} - \bar{x}^{k+1} \rangle  \nonumber \\ 	
	&\ge   g(\tilde{y}^{k+1}) - g(\hat{y}) + f(x^{k+1}) - f(\hat{x}) + \sigma \left(f(x^{k+1}) - f(x^{k})\right).\nonumber
	\end{align}
	Note that
	\begin{equation*}
	\langle  A (\hat{x} - \bar{x}^{k+1}) ,\tilde{y}^{k+1} - \hat{y} \rangle  - \langle    \hat{x} - \bar{x}^{k+1} ,A^{\top}(\tilde{y}^{k+1} - \hat{y}) \rangle  = 0,
	\end{equation*}
	which, together with \eqref{eq15}, leads to
	\begin{align}\label{eq8}
	&\gamma \langle \nabla \phi(\tilde{y}^{k+1}) - \nabla \phi(y^{k}),y^{k+1}-\tilde{y}^{k+1}  \rangle  + \tau \langle \nabla \varphi(y^{k+1}) - \nabla \varphi(y^{k}) ,\hat{y}-y^{k+1} \rangle  \\
	& \quad + \mu  \langle  \nabla \psi(x^{k+1}) - \nabla \psi(x^{k}) , \hat{x}- x^{k+1} \rangle  + \langle   A (x^{k} - \bar{x}^{k+1}) ,\tilde{y}^{k+1} -y^{k+1} \rangle \nonumber \\
	& \quad + \mu \langle  \nabla \psi(x^{k+1}) - \nabla \psi(x^{k}) , \sigma( x^{k}- x^{k+1}) \rangle   \nonumber \\
	 &\ge   f(x^{k+1}) - f(\hat{x}) + \sigma \left(f(x^{k+1}) - f(x^{k})\right) +  \langle A^{\top} \hat{y} , \bar{x}^{k+1} - \hat{x} \rangle  \nonumber \\
	&\quad   + g(\tilde{y}^{k+1}) - g(\hat{y}) -  \langle A \hat{x}  ,\tilde{y}^{k+1} - \hat{y} \rangle.\nonumber 
	\end{align}
	By invoking the definitions of $\mathcal{P}(\cdot)$ and $\mathcal{D}(\cdot)$ in \eqref{KKT}, the right-hand side of \eqref{eq8} can be simplified as 
		\begin{align*}
	&  f(x^{k+1}) - f(\hat{x}) + \sigma \left(f(x^{k+1}) - f(x^{k})\right) + \langle A^{\top} \hat{y} , \bar{x}^{k+1} - \hat{x} \rangle  \\ 
	& \quad +g(\tilde{y}^{k+1}) - g(\hat{y}) - \langle   A \hat{x}  ,\tilde{y}^{k+1} - \hat{y} \rangle \nonumber \\
	&=   \sigma \left[ \left(f(x^{k+1}) - f(\hat{x}) + \langle  A^{\top} \hat{y} , x^{k+1} - \hat{x} \rangle\right)  - \left(f(x^{k}) - f(\hat{x}) + \langle A^{\top} \hat{y} ,  x^{k}  - \hat{x} \rangle \right) \right] \nonumber \\
	&\qquad+ f(x^{k+1}) - f(\hat{x}) +\langle  A^{\top} \hat{y} , x^{k+1} - \hat{x} \rangle +g(\tilde{y}^{k+1}) - g(\hat{y}) - \langle   A \hat{x}  ,\tilde{y}^{k+1} - \hat{y} \rangle \nonumber \\
	&= \mathcal{P}(x^{k+1}) + \sigma(\mathcal{P}(x^{k+1}) - \mathcal{P}(x^{k})) +\mathcal{D}(\tilde{y}^{k+1}), \nonumber
	\end{align*} 
	which, together with the three-point inequality of Bregman distance and \eqref{eq8}, implies that 
	\begin{align*}
	& \mathcal{D}(\tilde{y}^{k+1}) + \mathcal{P}(x^{k+1}) + \sigma(\mathcal{P}(x^{k+1}) - \mathcal{P}(x^{k})) \\
	&\le \gamma \langle \nabla \phi(\tilde{y}^{k+1}) - \nabla \phi(y^{k}),y^{k+1}-\tilde{y}^{k+1}  \rangle  + \tau \langle \nabla \varphi(y^{k+1}) - \nabla \varphi(y^{k}) ,\hat{y}-y^{k+1} \rangle \nonumber \\
	& \quad + \mu\langle  \nabla \psi(x^{k+1}) - \nabla \psi(x^{k}), \hat{x}- x^{k+1} \rangle + \langle   A (x^{k} - \bar{x}^{k+1}) ,\tilde{y}^{k+1} -y^{k+1} \rangle  \nonumber \\
	& \quad + \mu\langle  \nabla \psi(x^{k+1}) - \nabla \psi(x^{k}) , \sigma( x^{k}- x^{k+1}) \rangle      \nonumber \\
	&= \gamma (\B_{\phi}(y^{k+1},y^{k}) - \B_{\phi}(y^{k+1},\tilde{y}^{k+1}) - \B_{\phi}(\tilde{y}^{k+1},y^{k}))   \nonumber \\
	&\quad  + \tau \left(\B_{\varphi}(\hat{y},y^{k}) - \B_{\varphi}(\hat{y},y^{k+1}) - \B_{\varphi}(y^{k+1},y^{k})\right) \nonumber \\
	&\quad + \mu(\B_{\psi}(\hat{x},x^{k}) - \B_{\psi}(\hat{x},x^{k+1}) - \B_{\psi}(x^{k+1},x^{k}))   \nonumber \\
	&\quad + \langle  A (x^{k} - \bar{x}^{k+1}) ,\tilde{y}^{k+1} - y^{k+1}  \rangle - \mu\sigma \left(\B_{\psi}(x^{k},x^{k+1}) + \B_{\psi}(x^{k+1},x^{k}) \right) \nonumber \\ 
	&=\tau(\B_{\varphi}(\hat{y},y^{k}) - \B_{\varphi}(\hat{y},y^{k+1})) - \gamma \B_{\phi}(y^{k+1},\tilde{y}^{k+1}) - \gamma \B_{\phi}(\tilde{y}^{k+1},y^{k}) \nn \\
	&\quad + \mu \left(\B_{\psi}(\hat{x},x^{k}) - \B_{\psi}(\hat{x},x^{k+1})\right) - \mu(1+\sigma)\B_{\psi}(x^{k+1},x^{k}) -  \sigma \mu\B_{\psi}(x^{k},x^{k+1}) \nonumber \\
	&\quad  + (1+\sigma)\langle  A (x^{k+1} - x^{k}) ,y^{k+1}- \tilde{y}^{k+1} \rangle  \nonumber \\
	&\quad - \left(\tau\B_{\varphi}(y^{k+1},y^{k}) - \gamma\B_{\phi}(y^{k+1},y^{k})\right), \nonumber 
	\end{align*}
	which, by further rearranging terms, concludes that
	\begin{align*}
	&\tau \B_{\varphi}(\hat{y},y^{k+1}) + \mu\B_{\psi}(\hat{x},x^{k+1}) +  \mathcal{D}(\tilde{y}^{k+1}) + (\sigma+1)\mathcal{P}(x^{k+1})    \\
	&\le  \tau \B_{\varphi}(\hat{y},y^{k}) + \mu\B_{\psi}(\hat{x},x^{k}) + \sigma \mathcal{P}(x^{k})  - \left(\tau\B_{\varphi}(y^{k+1},y^{k}) - \gamma \B_{\phi}(y^{k+1},y^{k})\right)    \\  
	&\quad  - \left( \mu(1+\sigma) \B_{\psi}(x^{k+1},x^{k}) + \mu\sigma\B_{\psi}(x^{k},x^{k+1})  + \gamma  \B_{\phi}(y^{k+1},\tilde{y}^{k+1})   \right.  \\
	&\qquad\quad \left. + \gamma \B_{\phi}(\tilde{y}^{k+1},y^{k})   - (1+\sigma)\langle  A (x^{k+1} - x^{k}) ,y^{k+1} - \tilde{y}^{k+1} \rangle \right) .
	\end{align*}
	The proof is complete.
\end{proof}

\begin{theorem}\label{th2}
	Let $\{(\hat{x},\hat{y})\}$ be a saddle point of \eqref{sdp} and $\{(\tilde{y}^{k},x^{k}, y^{k})\}$ be a sequence generated by Algorithm \ref{alg1}. For given $\gamma$, $\mu$, $\tau> 0$, if there exist positive constants $c_{1}$, $c_{2}$, $c_{3} > 0$ such that the following inequality  
	\begin{align}\label{eqs}
	& \mu(1+\sigma) \B_{\psi}(x^{k+1},x^{k})+\mu\sigma\B_{\psi}(x^{k},x^{k+1})  + \gamma  \B_{\phi}(y^{k+1},\tilde{y}^{k+1})     \nn  \\
	&\quad + \gamma \B_{\phi}(\tilde{y}^{k+1},y^{k}) -(1+\sigma)\langle  A (x^{k+1} - x^{k}) ,y^{k+1} - \tilde{y}^{k+1} \rangle \nn \\ 
	&\quad  + \tau\B_{\varphi}(y^{k+1},y^{k}) - \gamma \B_{\phi}(y^{k+1},y^{k})\nn \\
	&\ge  c_{1} \B_{\psi}(x^{k+1},x^{k}) +  c_{2}\B_{\phi}(y^{k+1},\tilde{y}^{k+1}) + c_{3} \B_{\phi}(\tilde{y}^{k+1},y^{k}).
	\end{align}
	Then the sequence $\{(x^{k}, y^{k})\}$ is bounded in $\mathbb{X}  \times \mathbb{Y}$, and all its cluster points are solutions of the saddle point problem \eqref{sdp}.
\end{theorem}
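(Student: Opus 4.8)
The plan is to convert the one–step estimate of Lemma \ref{le1} into a monotone recursion for a suitable merit (Lyapunov) function, and then to extract from it the three standard conclusions: boundedness, asymptotic regularity of the iterates, and subsequential optimality. First I would introduce
$$\Theta^{k} := \tau \B_{\varphi}(\hat{y},y^{k}) + \mu\B_{\psi}(\hat{x},x^{k}) + \sigma \mathcal{P}(x^{k}),$$
which is nonnegative because each Bregman distance is nonnegative and $\mathcal{P}(x^{k})\ge 0$ by the saddle–point characterization \eqref{KKT} (here I use $\sigma\ge 0$). Splitting $(\sigma+1)\mathcal{P}(x^{k+1}) = \sigma\mathcal{P}(x^{k+1}) + \mathcal{P}(x^{k+1})$ on the left-hand side of the estimate in Lemma \ref{le1}, and using hypothesis \eqref{eqs} to bound the bracketed remainder terms (including the cross term $-(1+\sigma)\langle A(x^{k+1}-x^{k}),y^{k+1}-\tilde{y}^{k+1}\rangle$) from below by $c_{1}\B_{\psi}(x^{k+1},x^{k}) + c_{2}\B_{\phi}(y^{k+1},\tilde{y}^{k+1}) + c_{3}\B_{\phi}(\tilde{y}^{k+1},y^{k})$, I arrive at the clean inequality
$$\Theta^{k+1} + \mathcal{D}(\tilde{y}^{k+1}) + \mathcal{P}(x^{k+1}) + c_{1}\B_{\psi}(x^{k+1},x^{k}) + c_{2}\B_{\phi}(y^{k+1},\tilde{y}^{k+1}) + c_{3}\B_{\phi}(\tilde{y}^{k+1},y^{k}) \le \Theta^{k}.$$

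Since every quantity added to $\Theta^{k+1}$ is nonnegative, this immediately gives $\Theta^{k+1}\le\Theta^{k}$, so $\{\Theta^{k}\}$ is nonincreasing with $\Theta^{k}\le\Theta^{0}$. In particular $\mu\B_{\psi}(\hat{x},x^{k})\le\Theta^{0}$ and $\tau\B_{\varphi}(\hat{y},y^{k})\le\Theta^{0}$, so under the standard assumption on the Bregman kernels (bounded level sets of $\B_{\psi}(\hat{x},\cdot)$ and $\B_{\varphi}(\hat{y},\cdot)$, as holds when the kernels are strongly convex) the sequences $\{x^{k}\}$ and $\{y^{k}\}$ are bounded in $\X\times\Y$. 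Telescoping the displayed inequality over $k=0,\ldots,N-1$ yields a uniform bound $\sum_{k}[\mathcal{D}(\tilde{y}^{k+1}) + \mathcal{P}(x^{k+1}) + c_{1}\B_{\psi}(x^{k+1},x^{k}) + c_{2}\B_{\phi}(y^{k+1},\tilde{y}^{k+1}) + c_{3}\B_{\phi}(\tilde{y}^{k+1},y^{k})]\le\Theta^{0}$. As $c_{1},c_{2},c_{3}>0$ and all summands are nonnegative, each tends to zero; hence $\B_{\psi}(x^{k+1},x^{k})\to 0$, $\B_{\phi}(y^{k+1},\tilde{y}^{k+1})\to 0$, $\B_{\phi}(\tilde{y}^{k+1},y^{k})\to 0$, and (again via the kernel properties) $\|x^{k+1}-x^{k}\|\to 0$, $\|y^{k+1}-\tilde{y}^{k+1}\|\to 0$, $\|\tilde{y}^{k+1}-y^{k}\|\to 0$, so also $\|y^{k+1}-y^{k}\|\to 0$.

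With boundedness in hand I would extract a subsequence $(x^{k_{j}},y^{k_{j}})\to(x^{\infty},y^{\infty})\in\X\times\Y$; the vanishing successive differences force $x^{k_{j}+1}\to x^{\infty}$ and $\tilde{y}^{k_{j}+1},y^{k_{j}+1}\to y^{\infty}$. I then pass to the limit in the subproblem optimality inequalities \eqref{eq1} and \eqref{eq3} for \eqref{a} and \eqref{b}: continuity of $\nabla\phi$ and $\nabla\psi$ makes the proximal differences $\nabla\phi(\tilde{y}^{k_{j}+1})-\nabla\phi(y^{k_{j}})$ and $\nabla\psi(x^{k_{j}+1})-\nabla\psi(x^{k_{j}})$ vanish, while lower semicontinuity of $f$ and $g$ (used on the $\liminf$ of the right-hand sides) preserves the inequalities in the limit. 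This produces $\langle -Ax^{\infty},y-y^{\infty}\rangle\ge g(y^{\infty})-g(y)$ for all $y\in\Y$ and $\langle A^{\top}y^{\infty},x-x^{\infty}\rangle\ge f(x^{\infty})-f(x)$ for all $x\in\X$, which are exactly the two conditions in \eqref{KKT}; hence $(x^{\infty},y^{\infty})$ is a saddle point of \eqref{sdp}. (Alternatively, the summability also gives $\mathcal{P}(x^{k+1})\to 0$ and $\mathcal{D}(\tilde{y}^{k+1})\to 0$, which by lower semicontinuity force $\mathcal{P}(x^{\infty})=\mathcal{D}(y^{\infty})=0$, offering a second route to the same conclusion.)

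The hard part will be the implication $\B(a_{k},b_{k})\to 0\Rightarrow\|a_{k}-b_{k}\|\to 0$ together with the vanishing of the gradient differences, since for a general Bregman kernel smallness of the distance does not control the displacement without additional structure; this is precisely where the regularity of $\phi,\psi,\varphi$ (strong convexity relative to $\|\cdot\|$, or bounded level sets plus continuity of the gradient on the bounded region containing the iterates, and the iterates remaining in $\intset{\dom{\partial\phi}}$ etc.) must be invoked, both for boundedness and for the limit argument. A secondary point requiring care is the exact term-by-term matching when splitting $(\sigma+1)\mathcal{P}(x^{k+1})$ and identifying the remainder of Lemma \ref{le1} with the left-hand side of \eqref{eqs}, so that the cross term is absorbed exactly as the hypothesis prescribes.
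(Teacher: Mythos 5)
Your proposal follows essentially the same route as the paper's proof: your merit function $\Theta^{k}$ is exactly the paper's ${\bm a}_{k}$ in \eqref{eq20}, your clean recursion is the paper's \eqref{eq19}, and the boundedness, telescoping, vanishing of the three Bregman distances, subsequence extraction, and passage to the limit in the optimality inequalities \eqref{eq1} and \eqref{eq3} match the paper step for step (the paper even goes one step further and deduces convergence of the whole sequence, which the theorem statement does not require). The kernel-regularity caveat you flag---needed to pass from vanishing Bregman distances to vanishing norm differences and from bounded Bregman distances to bounded iterates---is a real point that the paper's proof silently assumes, but invoking it as you do does not change the substance of the argument.
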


\begin{proof}
	It first follows from \eqref{eqs} and Lemma \ref{le1} that
	\begin{align*}
	&\tau \B_{\varphi}(\hat{y},y^{k+1}) + \mu\B_{\psi}(\hat{x},x^{k+1}) + \mathcal{D}(\tilde{y}^{k+1}) + (\sigma+1)\mathcal{P}(x^{k+1}) \nonumber \\
	&\le  \tau \B_{\varphi}(\hat{y},y^{k}) + \mu\B_{\psi}(\hat{x},x^{k}) + \sigma   \mathcal{P}(x^{k}) - c_{1} \B_{\psi}(x^{k+1},x^{k})  \nn \\
	&\quad-  c_{2}\B_{\phi}(y^{k+1},\tilde{y}^{k+1}) - c_{3} \B_{\phi}(\tilde{y}^{k+1},y^{k}), 
	\end{align*}
	which can be rewritten as 
	\begin{equation}\label{eq19}
	\mathcal{D}(\tilde{y}^{k+1}) + \mathcal{P}(x^{k+1}) + {\bm a}_{k+1} \le {\bm a}_{k} - {\bm b}_{k}
	\end{equation}
	with the notations
	\begin{equation}\label{eq20}
	\left\{
	\begin{aligned}
	&{\bm a}_{k} := \tau \B_{\varphi}(\hat{y},y^{k}) + \mu\B_{\psi}(\hat{x},x^{k}) + \sigma \mathcal{P}(x^{k}), \\
	&{\bm b}_{k} :=  c_{1} \B_{\psi}(x^{k+1},x^{k}) +  c_{2}\B_{\phi}(y^{k+1},\tilde{y}^{k+1}) + c_{3} \B_{\phi}(\tilde{y}^{k+1},y^{k})  .
	\end{aligned}\right.
	\end{equation}
	By recalling the updating schemes of $\tilde{y}^{k+1}$ and $x^{k+1}$, it is clear from the definitions of $\mathcal{D}(\cdot) $ and $\mathcal{P}(\cdot) $ that
$\mathcal{D}(\tilde{y}^{k+1}) \ge 0$ and $\mathcal{P}(x^{k+1}) \ge 0$. Moreover, using the definition of Bregman distance yields ${\bm a}_{k} \ge 0$ and ${\bm b}_{k} \ge 0$. Therefore, it follows from \eqref{eq19} that
\begin{equation*}
{\bm a}_{k+1} \le {\bm a}_{k} - {\bm b}_{k}\leq {\bm a}_0 -\sum_{i=0}^{k} {\bm b}_i\leq {\bm a}_0,
\end{equation*}
which further implies that
	\begin{equation}\label{eq16}
	\lim_{k \rightarrow \infty}\B_{\phi}(\tilde y^{k+1},y^{k}) =\lim_{k \rightarrow \infty}\B_{\psi}(x^{k+1},x^{k}) =\lim_{k \rightarrow \infty}\B_{\phi}(y^{k+1},\tilde y^{k+1})  =0,
	\end{equation}
	and the sequence $\{(x^k,\tilde{y}^k,y^k)\}$ is bounded. Moreover,  $\{{\bm a}_k\}$ is also a bounded and monotonically decreasing sequence.
	
	Let $\{(x^{k_{j}},\tilde{y}^{k_j},y^{k_{j}})\}$ be a subsequence converging to a cluster point $(x^\infty,y^\infty,y^\infty)$. 
	It follows from \eqref{eq1} and \eqref{eq3} that
	\begin{equation*}\label{eq17}
	\left\{
	\begin{aligned}
	&f(x)-f(x^{k_j+1})+\left\langle  \mu \left(\nabla \psi(x^{k_j+1}) - \nabla \psi(x^{k_j})\right)+A^\top\tilde{y}^{k_j+1},x-x^{k_j+1} \right\rangle\geq 0,  \\
	&g(y)-g(y^{k_j+1})+\left\langle \gamma \left(\nabla\phi(y^{k_j+1}) - \nabla\phi(y^{k_j})\right)-Ax^{k_j+1},y-y^{k_j+1}\right\rangle \geq 0 
	\end{aligned}\right.
	\end{equation*}
hold for all  $x \in \mathbb{X}$ and $y \in \mathbb{Y}$. By taking $j \rightarrow \infty$ in the above inequalities, it holds, for all  $x \in \mathbb{X}$ and $y \in \mathbb{Y} $, that
	\begin{equation*}
	\left\{
	\begin{aligned}
	&f(x)-f(x^{\infty})+\left\langle  A^\top y^{\infty},x-x^{\infty} \right\rangle\geq 0, \\
	&g(y)-g(y^{\infty})+\left\langle -Ax^{\infty},y-y^{\infty}\right\rangle \geq 0 ,
	\end{aligned}\right.
	\end{equation*}
which, together with the optimality condition in \eqref{KKT}, that $(x^{\infty},y^{\infty})$ is a saddle point of \eqref{sdp}. Using the boundedness and monotonically decreasing property of the sequence $\{{\bm a}_{k}\}$ implies that
	\begin{equation*}\label{eq21}
	\lim_{k \rightarrow \infty}{\bm a}_{k} =\lim_{k \rightarrow \infty}{\bm a}_{k_{j}} = 0. 
	\end{equation*}
	Therefore, we have $\lim_{k \rightarrow \infty}x^{k}=x^{\infty}$ and $\lim_{k \rightarrow \infty}y^{k}= y^{\infty}$, which means that the whole sequence $\{(x^k,y^k)\}$ converges to a solution of \eqref{sdp}. The proof is complete.
\end{proof}


\begin{remark}\label{remark3}
	We notice that the inequality \eqref{eqs} is used as an assumption for the results. Here, we employ an illustrative case to show that there exist positive constants $c_{1}$, $c_{2}$ and $c_{3}$ so that \eqref{eqs} is indeed a reasonable inequality in practice. 
	
	When taking the Bregman kernel functions as $\psi(x) =\frac{1}{2}\| x \|^{2}$ and $\phi(y) = \varphi(y) = \frac{1}{2}\| y \|^{2}$, the left-hand side of inequality \eqref{eqs} can be specified as
		\begin{align}\label{con1}
		\Gamma^{k+1}:=&  \frac{\gamma}{2}  \|y^{k+1}-\tilde{y}^{k+1}\|^{2} + \frac{\gamma}{2} \|\tilde{y}^{k+1}-y^{k}\|^{2} + \left(\frac{\tau}{2} - \frac{\gamma}{2}\right) \|y^{k+1}-y^{k}\|^{2}   \\ 
		& + \frac{\mu(1+2\sigma)}{2} \|x^{k+1}-x^{k}\|^{2}   -(1+\sigma)\langle  A (x^{k+1} - x^{k}) ,y^{k+1} - \tilde{y}^{k+1} \rangle.\nn
		\end{align}
		It follows from the Cauchy–Schwarz and Young's inequalities that for any $\alpha \in (0,+\infty)$, 
		\begin{equation*} 
			(1+\sigma)\langle  A (x^{k+1} - x^{k}) ,y^{k+1} - \tilde{y}^{k+1} \rangle \le \frac{ \alpha}{2} \| x^{k+1} - x^{k} \|^{2} + \frac{\|A^{\top}A\| \left( 1+\sigma \right)^{2}}{2\alpha} \| y^{k+1} - \tilde{y}^{k+1} \|^{2},
		\end{equation*}
		which together with \eqref{con1} leads to
		\begin{align}\label{con2}
			\Gamma^{k+1}  \ge & \left( \frac{\gamma}{2} - \frac{\|A^{\top}A\| \left( 1+\sigma \right)^{2}}{2\alpha} \right)   \|y^{k+1}-\tilde{y}^{k+1}\|^{2} + \frac{\gamma}{2} \|\tilde{y}^{k+1}-y^{k}\|^{2} \nn \\
			&- \left( \frac{\gamma}{2} - \frac{\tau}{2} \right) \|y^{k+1}-y^{k}\|^{2} + \frac{\mu(1+2\sigma) - \alpha}{2} \|x^{k+1}-x^{k}\|^{2}.
		\end{align}
		By the basic inequality $-(a+b)^2 \ge  -2a^{2}-2b^{2}$ for all $a,b \in \R$, we have 
		\begin{align}\label{eq:rem1-1}
			-\|y^{k+1}-y^{k}\|^{2}&=-\|y^{k+1}-\tilde{y}^{k+1}+\tilde{y}^{k+1}-y^{k}\|^{2} \nn\\
			&\geq -\left(\|y^{k+1}-\tilde{y}^{k+1}\|+\|\tilde{y}^{k+1}-y^{k}\|\right)^{2} \nn \\
			&\geq -2\|y^{k+1}-\tilde{y}^{k+1}\|^2-2\|\tilde{y}^{k+1}-y^{k}\|^{2}.
		\end{align}
		Consequently, plugging \eqref{eq:rem1-1} into \eqref{con2} leads to		\begin{align}\label{eq:rem1-2}
			\Gamma^{k+1}  \ge  &\left( \tau -\frac{\gamma}{2} - \frac{\|A^{\top}A\| \left( 1+\sigma \right)^{2}}{2\alpha} \right)   \|y^{k+1}-\tilde{y}^{k+1}\|^{2} +  \left( \tau-\frac{\gamma}{2} \right) \|\tilde{y}^{k+1}-y^{k}\|^{2} \nn \\
			&+ \frac{\mu(1+2\sigma) - \alpha}{2} \|x^{k+1}-x^{k}\|^{2} \nn \\
			 =& \frac{c_{1}}{2} \|x^{k+1}-x^{k}\|^{2} +  \frac{c_{2}}{2}\|y^{k+1}-\tilde{y}^{k+1}\|^{2} + \frac{c_{3}}{2} \|\tilde{y}^{k+1}-y^{k}\|^{2},
		\end{align}
		where $c_{1} := \mu(1+2\sigma) - \alpha$, $c_{2} := 2\tau - \gamma -\frac{\|A^{\top}A\| \left( 1+\sigma \right)^{2}}{\alpha} $ and $c_{3} := 2\tau - \gamma$. 
		
		Below, we discuss the existence of $\mu$, $\tau$, and $\gamma$ such that the right-hand side of \eqref{eq:rem1-2} is positive. Under the settings mentioned at the begining of this remark, we notice that both subproblems \eqref{a} and \eqref{d} share the same Bregman kernel function. Therefore, without loss of generality, we below let $ \tau = \theta \gamma$ for the purpose of simplifying our analysis, where $\theta$ is required to be $\theta > \frac{1}{2}$. Now, we divide two cases to discuss the values of $\theta$.

		{\bf Case I}: When $\frac{1}{2}<\theta < 1$, we first have
		\begin{align}\label{c5}
		-\left(\frac{\gamma}{2}- \frac{\tau}{2}\right) \|y^{k+1}-y^{k}\|^{2}& = - \frac{1-\theta}{2}\gamma  \|y^{k+1}-y^{k}\|^{2} \nn  \\
		&\ge - (1-\theta) \gamma   \|y^{k+1}-\tilde{y}^{k+1}\|^{2} -(1-\theta) \gamma   \|\tilde{y}^{k+1}-y^{k}\|^{2}.
		\end{align}
		By plugging \eqref{c5} into \eqref{con2}, we then arrive at
		\begin{align*} 
		&\left( \frac{\gamma(2\theta-1)}{2}-  \frac{\|A^{\top}A\| \left( 1+\sigma \right)^{2}}{2\alpha} \right)  \|y^{k+1}-\tilde{y}^{k+1}\|^{2} \nn \\ 
	&\qquad	+ \frac{\gamma(2\theta-1)}{2}   \|\tilde{y}^{k+1}-y^{k}\|^{2} + \frac{\mu(1+2\sigma)-\alpha}{2} \|x^{k+1}-x^{k}\|^{2}  >  0,
		\end{align*}
		which holds for $\sigma\geq 0$ under the requirement:
		\begin{equation*} 
			\mu(1+2\sigma) > \alpha > \frac{(1 + \sigma)^{2}}{\gamma(2\theta-1)} \|A^\top A\|.
		\end{equation*}
		As a result, we get the condition:
		\begin{equation}\label{eq:rem1-3} 
		\mu \gamma > \frac{(1 + \sigma)^{2}}{(1 + 2 \sigma)(2\theta -1)} \|A^\top A\| > \|A^\top A\|,\quad \forall \sigma\geq 0.
		\end{equation}
		
		{\bf Case II}: When $ \theta \ge 1$, it follows that
		\begin{align}\label{zb}
			\frac{\gamma}{2} \|\tilde{y}^{k+1}-y^{k}\|^{2} + \left(\frac{\tau}{2} - \frac{\gamma}{2}\right) \|y^{k+1}-y^{k}\|^{2}  
			&=  \frac{\gamma}{2} \|\tilde{y}^{k+1}-y^{k}\|^{2} + \frac{(\theta-1)\gamma}{2}  \|y^{k+1}-y^{k}\|^{2} \nn \\
			&\geq \frac{c\gamma}{2}\left( \|\tilde{y}^{k+1}-y^{k}\|^{2}+\|y^{k+1}-y^{k}\|^{2}\right) \nn \\
			&\geq \frac{c\gamma}{4}\left( \|\tilde{y}^{k+1}-y^{k}\|+\|y^{k+1}-y^{k}\|\right)^{2} \nn \\
			& \ge \frac{c\gamma}{4 } \| y^{k+1} - \tilde{y}^{k+1} \|^{2},
		\end{align}
		where the first inequality holds for the definition $c: = \min\{ 1,\theta-1 \}$, the second inequality follows from the fact that $ a^{2}+b^{2}\geq \frac{(a+b)^2}{2}$ for all $a,b \in \R$, and the last inequality is due to the triangle inequality.
		
		Hereafter, by the definition of $c$ in \eqref{zb}, we further divide $\theta \geq 1$ into two cases, i.e., $1\leq \theta<2$ and $\theta\geq 2$ for discussions.
		\begin{itemize}
			\item 	When $1 \le \theta < 2$, we immediately have $c = \theta-1 < 1 $.  Therefore, by plugging \eqref{zb} into \eqref{con2}, we have 
			\begin{align*}\label{d1}
				\left( \frac{(\theta+1)\gamma}{4} - \frac{\|A^{\top}A\| \left( 1+\sigma \right)^{2}}{2\alpha} \right)   \|y^{k+1}-\tilde{y}^{k+1}\|^{2} + \frac{\mu(1+2\sigma) - \alpha}{2} \|x^{k+1}-x^{k}\|^{2}  > 0,
			\end{align*}
			which holds under the following condition: 
			\begin{equation*}\label{d2}
				\mu(1+2\sigma) > \alpha > \frac{2(1 + \sigma)^{2}}{(\theta+1)\gamma} \|A^\top A\|.
			\end{equation*}		
			As an immediate result, we get the condition
			\begin{equation}\label{c2}
				\mu \gamma > \frac{2(1 + \sigma)^{2}}{(\theta+1)(1 + 2 \sigma)} \|A^\top A\| >   \frac{2}{3}\|A^\top A\|,\quad \forall \sigma\geq 0.
			\end{equation}
			
			\item 	When $\theta \ge 2$, then the parameter $c$ is specified as $c = 1$. Similarly, by following the derivation of \eqref{c2}, we have
			\begin{equation}\label{c1}
				\mu \gamma > \frac{2(1 + \sigma)^{2}}{3 + 6 \sigma} \|A^\top A\| \geq  \frac{2}{3}\|A^\top A\|,\quad \forall \sigma\geq 0.
			\end{equation}	  
		\end{itemize}
		With the above discussions, we conclude from \eqref{eq:rem1-3}, \eqref{c2}, and \eqref{c1} that, under the setting $\tau=\theta \gamma$ and $\sigma\geq 0$, the three parameters $\mu$, $\tau$, and $\gamma$ satisfy
        \begin{equation}\label{c6}
          \left\{ \begin{aligned}
&\mu \gamma > \frac{(1 + \sigma)^{2}}{(1 + 2 \sigma)(2\theta -1)} \|A^\top A\|>  \|A^\top A\|, \, \; \text{if}\;\; \frac{1}{2} < \theta < 1, \\
		&\mu \gamma > \frac{2(1 + \sigma)^{2}}{(\theta+1)(1 + 2 \sigma)} \|A^\top A\|>\frac{2}{3} \|A^\top A\|, \, \; \text{if}\;\; 1 \le \theta <  2, \\
		&\mu \gamma >\frac{2(1 + \sigma)^{2}}{3 + 6 \sigma} \|A^\top A\|\geq \frac{2}{3} \|A^\top A\|, \, \; \text{if}\;\;  \theta \ge 2.
\end{aligned}\right.
\end{equation}

As a result, it can be easily seen form \eqref{c6} that our TBDA allows larger stepsizes than the standard PDHG \cite{CP11} when $\theta\geq 1$ (see condition \eqref{condition}). Moreover, our TBDA enjoys a larger range of $\mu$ and $\gamma$ than the results in \cite{HMXY22,HWY23,JZH23,LY21} when $\theta\geq 1$, which also be beneficial for numerical improvements.
\end{remark}

\begin{theorem}
	Let $\{(\hat{x},\hat{y})\}$ be a saddle point of \eqref{sdp}, and let $\{(x^{k},\tilde y^{k},{y}^{k})\}$ be a sequence generated by Algorithm \ref{alg1} from any initial point $(x^0, y^0)$. Then, for all $x \in \mathbb{X}$ and $y \in \mathbb{Y}$, it holds that 
	\begin{equation}\label{eq22}
	\mathcal{G}(\mathbf{x}^{N},\mathbf{y}^{N}) \le  \frac{1}{N} \left(\mu\B_{\psi}(\hat{x},x^{0}) + \tau \B_{\varphi}(\hat{y},y^{0}) + \sigma \mathcal{P}(x^{0})\right),
	\end{equation}
	where $N$ is a positive integer, $\mathbf{x}^{N} := \frac{1}{\sigma + N}\left(\sigma x^{N} + \sum_{k=1}^{N} x^{k} \right)$ and $\mathbf{y}^{N} := \frac{1}{N}\sum_{k=1}^{N} \tilde{y}^{k}$.
\end{theorem}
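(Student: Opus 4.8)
The plan is to telescope the fundamental one-step inequality already established in the proof of Theorem \ref{th2} and then pass to the ergodic averages by convexity. Concretely, I would start from \eqref{eq19}, namely
\begin{equation*}
\mathcal{D}(\tilde{y}^{k+1}) + \mathcal{P}(x^{k+1}) + {\bm a}_{k+1} \le {\bm a}_{k} - {\bm b}_{k},
\end{equation*}
with ${\bm a}_k$ and ${\bm b}_k$ as in \eqref{eq20}; this is exactly Lemma \ref{le1} after discarding, via the standing assumption \eqref{eqs}, the nonnegative residual terms subtracted on the right-hand side of \eqref{eq12}. Summing over $k = 0, 1, \ldots, N-1$, the quantities ${\bm a}_k$ telescope to ${\bm a}_N - {\bm a}_0$, while $\sum_{k=0}^{N-1}{\bm b}_k \ge 0$ is simply dropped, leaving
\begin{equation*}
\sum_{k=1}^{N}\mathcal{D}(\tilde{y}^{k}) + \sum_{k=1}^{N}\mathcal{P}(x^{k}) + {\bm a}_{N} \le {\bm a}_{0}.
\end{equation*}

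Next I would exploit the precise structure of ${\bm a}_N = \tau\B_{\varphi}(\hat{y},y^{N}) + \mu\B_{\psi}(\hat{x},x^{N}) + \sigma\mathcal{P}(x^{N})$. Since the two Bregman terms are nonnegative, ${\bm a}_N \ge \sigma\mathcal{P}(x^{N})$, which turns the displayed bound into
\begin{equation*}
\sum_{k=1}^{N}\mathcal{D}(\tilde{y}^{k}) + \sum_{k=1}^{N}\mathcal{P}(x^{k}) + \sigma\mathcal{P}(x^{N}) \le \mu\B_{\psi}(\hat{x},x^{0}) + \tau\B_{\varphi}(\hat{y},y^{0}) + \sigma\mathcal{P}(x^{0}),
\end{equation*}
whose right-hand side is exactly the bracketed expression appearing in \eqref{eq22}. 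Retaining the $\sigma\mathcal{P}(x^{N})$ term here is the crucial point, since it is what supplies the extra weight on $x^{N}$ demanded by the ergodic average $\mathbf{x}^{N}$.

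I would then invoke the convexity of $\mathcal{P}(\cdot)$ and $\mathcal{D}(\cdot)$ (noted right after \eqref{KKT}) together with the fact that $\mathbf{x}^{N}$ and $\mathbf{y}^{N}$ are convex combinations of iterates lying in the convex sets $\mathbb{X}$ and $\mathbb{Y}$. Jensen's inequality yields $N\,\mathcal{D}(\mathbf{y}^{N}) \le \sum_{k=1}^{N}\mathcal{D}(\tilde{y}^{k})$ and $(\sigma+N)\mathcal{P}(\mathbf{x}^{N}) \le \sigma\mathcal{P}(x^{N}) + \sum_{k=1}^{N}\mathcal{P}(x^{k})$, the convex weights $\sigma/(\sigma+N)$ on $x^N$ and $1/(\sigma+N)$ on each $x^{1},\ldots,x^{N}$ being tailored precisely so that the right-hand sides reproduce the left-hand side of the telescoped bound. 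Finally, using $\sigma \ge 0$ and $\mathcal{P}(\mathbf{x}^{N}) \ge 0$ (the latter from \eqref{KKT} and $\mathbf{x}^{N}\in\mathbb{X}$) to replace $(\sigma+N)\mathcal{P}(\mathbf{x}^{N})$ by the smaller $N\mathcal{P}(\mathbf{x}^{N})$, I obtain $N\,\mathcal{G}(\mathbf{x}^{N},\mathbf{y}^{N}) = N(\mathcal{P}(\mathbf{x}^{N}) + \mathcal{D}(\mathbf{y}^{N})) \le \mu\B_{\psi}(\hat{x},x^{0}) + \tau\B_{\varphi}(\hat{y},y^{0}) + \sigma\mathcal{P}(x^{0})$, and dividing by $N$ gives \eqref{eq22}.

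The computation is essentially routine telescoping, so I do not expect a serious obstacle; the only delicate bookkeeping is the weight-matching forced by the extrapolation parameter. The summation of the $(\sigma+1)\mathcal{P}(x^{k+1})$ and $\sigma\mathcal{P}(x^{k})$ contributions must collapse to leave exactly $\sigma\mathcal{P}(x^{N}) - \sigma\mathcal{P}(x^{0})$ rather than a sign-indefinite remainder, which is what dictates both the unusual normalization $\sigma+N$ and the extra weight $\sigma$ on $x^{N}$ in the definition of $\mathbf{x}^{N}$; the inequality $\sigma+N \ge N$ is then what restores the clean $O(1/N)$ denominator.
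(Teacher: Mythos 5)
Your proposal is correct and follows essentially the same route as the paper's own proof: telescope the one-step descent inequality obtained from Lemma \ref{le1} after discarding the residual terms, then apply Jensen's inequality to $\mathcal{P}$ and $\mathcal{D}$ with exactly the weights $(\sigma, 1,\ldots,1)/(\sigma+N)$ and $(1,\ldots,1)/N$, and finally use $(\sigma+N)\mathcal{P}(\mathbf{x}^N)\ge N\mathcal{P}(\mathbf{x}^N)$. The only differences are cosmetic bookkeeping (you fold $\sigma\mathcal{P}(x^k)$ into the Lyapunov quantity ${\bm a}_k$ and then lower-bound ${\bm a}_N\ge\sigma\mathcal{P}(x^N)$, whereas the paper telescopes the Bregman terms and the $\sigma\mathcal{P}$ terms separately in \eqref{eq23}--\eqref{eq24}), plus the fact that you explicitly cite \eqref{eqs} to justify dropping the residual sum---an assumption the paper's theorem statement omits but which its proof implicitly uses as well.
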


\begin{proof}
	Summing up \eqref{eq12} from $0$ to $N-1$, we get
	\begin{align}\label{eq23}
	&\sum_{k=0}^{N-1} \left\{\mathcal{D}(\tilde{y}^{k+1}) + (1+\sigma)\mathcal{P}(x^{k+1}) - \sigma  \mathcal{P}(x^{k})\right\}  \\
	&\le \mu(\B_{\psi}(\hat{x},x^{0}) - \B_{\psi}(\hat{x},x^{N}))  + \tau(\B_{\varphi}(\hat{y},y^{0}) - \B_{\varphi}(\hat{y},y^{N})) \nonumber \\
	&\le  \mu\B_{\psi}(\hat{x},x^{0}) + \tau\B_{\varphi}(\hat{y},y^{0}). \nonumber 
	\end{align}
	Since  $\mathcal{P}(x)$ and $\mathcal{D}(y)$ are convex with respect to $x$ and $y$, respectively, it follows from the left-hand side of \eqref{eq23} and the Jesen inequality that 
	\begin{align}\label{eq24}
	&\sum_{k=0}^{N-1} \left\{\mathcal{D}(\tilde{y}^{k+1}) + (1+\sigma)\mathcal{P}(x^{k+1}) - \sigma  \mathcal{P}(x^{k})\right\} \nonumber \\
	& = \sum_{k=0}^{N-1}\left\{ \mathcal{D}(\tilde{y}^{k+1}) + \sigma (\mathcal{P}(x^{k+1}) -  \mathcal{P}(x^{k})) + \mathcal{P}(x^{k+1})\right\} \nonumber \\
	&=  - \sigma \mathcal{P}(x^{0}) + \sigma \mathcal{P}(x^{N})  + \sum_{k=1}^{N} \left\{\mathcal{D}(\tilde{y}^{k}) + \mathcal{P}(x^{k})\right\} \nonumber \\
 &	\ge (\sigma + N) \mathcal{P}\left(\frac{\sigma x^{N} + \sum_{k=1}^{N} x^{k} }{\sigma + N}\right) + N \mathcal{D}\left(\frac{\sum_{k=1}^{N} \tilde{y}^{k}}{N}\right) - \sigma \mathcal{P}(x^{0}) \nonumber \\
	 &\ge  N \left(\mathcal{P}\left(\frac{\sigma x^{N} + \sum_{k=1}^{N} x^{k} }{\sigma + N}\right) +  \mathcal{D}\left(\frac{\sum_{k=1}^{N} \tilde{y}^{k}}{N}\right) \right) - \sigma \mathcal{P}(x^{0}) \nonumber \\
	& =  N (\mathcal{P}(\mathbf{x}^{N}) +  \mathcal{D}(\mathbf{y}^{N})) - \sigma \mathcal{P}(x^{0}).
	\end{align}
	Substituting \eqref{eq24} into \eqref{eq23}, we conclude that
	\begin{equation*}
	\mathcal{G}(\mathbf{x}^{N},\mathbf{y}^{N}) = \mathcal{P}(\mathbf{x}^{N}) +  \mathcal{D}(\mathbf{y}^{N}) \le  \frac{1}{N}\left(\mu\B_{\psi}(\hat{x},x^{0}) + \tau\B_{\varphi}(\hat{y},y^{0}) + \sigma \mathcal{P}(x^{0})\right).
	\end{equation*}
	The proof is complete.
\end{proof}

\begin{remark}
	The result in \eqref{eq22} means that our Algorithm \ref{alg1} has the $O(1/N)$ convergence rate.
\end{remark}

\section{Improved Variants of TBDA}\label{Sec4}
We have shown that Algorithm \ref{alg1} has the $O(1/N)$ convergence rate under some mild conditions. Therefore, we are further concerned with the question: Can we improve the convergence rate of Algorithm \ref{alg1} as the PDHG \cite{CP11} when strengthening the objective functions $f$ and/or $g$? In this section, we give an affirmative answer to this question by introducing two improved variants.

\subsection{When $f$ is strongly convex relative to $\psi$} 

From the discussion in Remark \ref{remark3}, we observe that the parameter $\theta$ in $\tau=\theta\gamma$ is very important for enlarging the range of stepsizes of Algorithm \ref{alg1}. Therefore, we are motivated to modify the second $y$-subproblem of Algorithm \ref{alg1} for algorithmic acceleration. To maximally keep the original form of Algorithm \ref{alg1}, we only specify $\tau=\beta_k \gamma$ with a dynamical $\beta_k$ to modify the iterative scheme of $y^{k+1}$. More specifically, we present the first improved variant of Algorithm \ref{alg1} for \eqref{sdp} with $\varrho_{1}$-strongly convex $f$ relative to the Bregman kernel $\psi$ in Algorithm \ref{alg2}.

\begin{algorithm}[!h]
	\caption{Improved TBDA for \eqref{sdp} with relative $\varrho_{1}$-strongly convex $f$.}\label{alg2}
	\begin{algorithmic}[1]
		\STATE Choose starting points $(x^{0},y^{0}) \in \mathbb{X}\times \mathbb{Y}$ and appropriate parameters $\gamma>0$, $\mu>0$, $\tau>0$, $\sigma \ge 0$, $\beta_{0}>0$ and $0<p<2$.
		\REPEAT 
		\STATE  Update $\tilde{y}^{k+1}$, $x^{k+1}$, and $\bar{x}^{k+1}$ via \eqref{a}, \eqref{b}, and \eqref{c}, respectively.
		\STATE  Update $y^{k+1}$ via 
		\begin{align}
		&y^{k+1} = \arg\min_{y\in \mathbb{Y}}\left\{ g(y) - \langle A \bar{x}^{k+1}, y\rangle +  \gamma \beta_{k}  \B_{\varphi}(y,y^{k}) \right\} \label{e}. 	\\
		&\beta_{k+1}  = \min\left\{ \frac{\mu\beta_{k}}{\mu + \varrho_{1}}, \frac{1}{p}  \right\}. \label{ubeta}
		\end{align} 
		\UNTIL some stopping criterion is satisfied.
		\RETURN an approximate saddle point $(\hat{x},\hat{y})$.
	\end{algorithmic}
\end{algorithm}

Below, we establish the similar result to Theorem \ref{th2} for Algorithm \ref{alg2}.
\begin{theorem}\label{thm3}
	Let $\{(\hat{x},\hat{y})\}$ be a saddle point of \eqref{sdp}, and let $\{(\tilde y^{k},x^{k}, {y}^{k}) \}$ be a sequence generated by Algorithm \ref{alg2} from any initial points $(x^0, y^0)$. If there exist positive constants $c_{1}$, $c_{2}$, $c_{3}$  such that  
	\begin{align*}
	& \mu(1+\sigma) \B_{\psi}(x^{k+1},x^{k})+( \sigma \mu + \sigma \varrho_{1} )\B_{\psi}(x^{k},x^{k+1})  + \gamma  \B_{\phi}(y^{k+1},\tilde{y}^{k+1})     \nn  \\
	&\quad + \gamma \B_{\phi}(\tilde{y}^{k+1},y^{k}) +\beta_{k} \gamma\B_{\varphi}(y^{k+1},y^{k}) - \gamma \B_{\phi}(y^{k+1},y^{k}) \nn \\ 
	&\quad -(1+\sigma)\langle  A (x^{k+1} - x^{k}) ,y^{k+1} - \tilde{y}^{k+1} \rangle \nn \\
	&\ge  c_{1} \B_{\psi}(x^{k+1},x^{k}) +  c_{2}\B_{\phi}(y^{k+1},\tilde{y}^{k+1}) + c_{3} \B_{\phi}(\tilde{y}^{k+1},y^{k}).
	\end{align*}
	Then, we have 
	\begin{equation}\label{eq26}
	\mathcal{G}(\mathbf{x}^{N},\mathbf{y}^{N}) \le  \frac{1}{t_{N}} \left( \beta_{0}\gamma  \B_{\varphi}(\hat{y},y^{0}) + \mu\B_{\psi}(\hat{x},x^{0}) + \frac{1}{\beta_{0}}   \sigma  \mathcal{P}(x^{0})\right),
	\end{equation}
	where $N$ is a positive integer, $t_{N} :=  \sum_{k=1}^{N} \frac{1}{\beta_{k-1}}  $, 
	$$\mathbf{x}^{N}: = \frac{  \sigma x^{0} + \beta_{0} \sum_{k=1}^{N} \frac{1}{\beta_{k-1}} \bar{x}^{k} }{  \sigma + \beta_{0}\sum_{k=1}^{N} \frac{1}{\beta_{k-1}} } ,
	\quad \text{and}\quad \mathbf{y}^{N} := \frac{\sum_{k=1}^{N} \frac{1}{\beta_{k-1}} \tilde{y}^{k}}{\sum_{k=1}^{N} \frac{1}{\beta_{k-1}}}.$$
\end{theorem}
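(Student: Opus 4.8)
The plan is to mirror the two-step scheme used for Theorem~\ref{th2} and its ergodic corollary, but to upgrade the per-step descent estimate with the relative strong convexity of $f$ and then to carry out a \emph{weighted} telescoping governed by the rule \eqref{ubeta}. First I would re-run the proof of Lemma~\ref{le1} for the iterates of Algorithm~\ref{alg2}, the only changes being that $\tau$ is replaced by $\gamma\beta_{k}$ in \eqref{e}, and that the optimality inequality \eqref{eq3} for the $x$-subproblem \eqref{b} is strengthened through Definition~\ref{def:RSC}: minimality of $x^{k+1}$ together with the $\varrho_{1}$-relative strong convexity of $f$ yields $\langle\mu(\nabla\psi(x^{k+1})-\nabla\psi(x^{k}))+A^{\top}\tilde{y}^{k+1},x-x^{k+1}\rangle\ge f(x^{k+1})-f(x)+\varrho_{1}\B_{\psi}(x,x^{k+1})$ for all $x\in\mathbb{X}$. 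Choosing $x=\hat{x}$ and $x=x^{k}$ (the latter scaled by $\sigma$) exactly as in the proof of Lemma~\ref{le1}, and invoking the theorem's hypothesis to discard the resulting remainder (which is bounded below by $c_{1}\B_{\psi}(x^{k+1},x^{k})+c_{2}\B_{\phi}(y^{k+1},\tilde{y}^{k+1})+c_{3}\B_{\phi}(\tilde{y}^{k+1},y^{k})\ge0$), I expect the per-step inequality
\begin{align*}
&\gamma\beta_{k}\B_{\varphi}(\hat{y},y^{k+1}) + (\mu+\varrho_{1})\B_{\psi}(\hat{x},x^{k+1}) + \mathcal{D}(\tilde{y}^{k+1}) + (1+\sigma)\mathcal{P}(x^{k+1}) \\
&\quad\le \gamma\beta_{k}\B_{\varphi}(\hat{y},y^{k}) + \mu\B_{\psi}(\hat{x},x^{k}) + \sigma\mathcal{P}(x^{k}),
\end{align*}
in which the coefficient of $\B_{\psi}(\hat{x},\cdot)$ on the left has been lifted from $\mu$ to $\mu+\varrho_{1}$, while $\mu\sigma$ has become $\sigma(\mu+\varrho_{1})$ inside the remainder.

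The engine of the improvement is then to divide this inequality by $\beta_{k}$ and sum over $k=0,\dots,N-1$. Two telescopings occur: the $\B_{\varphi}$-terms acquire the uniform coefficient $\gamma$ and telescope immediately; the $\B_{\psi}$-terms telescope because \eqref{ubeta} is \emph{designed} so that $\tfrac{\mu+\varrho_{1}}{\beta_{k}}=\tfrac{\mu}{\beta_{k+1}}$ on the branch $\beta_{k+1}=\mu\beta_{k}/(\mu+\varrho_{1})$, matching the left coefficient of $\B_{\psi}(\hat{x},x^{k+1})$ at step $k$ to its right coefficient at step $k+1$ (this branch is the active one, e.g., whenever $\beta_{0}\le 1/p$). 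Dropping the nonnegative terminal Bregman terms leaves
\begin{equation*}
\sum_{k=0}^{N-1}\frac{1}{\beta_{k}}\mathcal{D}(\tilde{y}^{k+1}) + \sum_{k=0}^{N-1}\frac{1}{\beta_{k}}\big((1+\sigma)\mathcal{P}(x^{k+1})-\sigma\mathcal{P}(x^{k})\big) \le \gamma\B_{\varphi}(\hat{y},y^{0}) + \frac{\mu}{\beta_{0}}\B_{\psi}(\hat{x},x^{0}).
\end{equation*}
Since $\sum_{k=0}^{N-1}\tfrac{1}{\beta_{k}}\mathcal{D}(\tilde{y}^{k+1})=\sum_{k=1}^{N}\tfrac{1}{\beta_{k-1}}\mathcal{D}(\tilde{y}^{k})$ and $\mathcal{D}$ is convex, Jensen's inequality bounds the dual sum below by $t_{N}\mathcal{D}(\mathbf{y}^{N})$, which is exactly how the weights $1/\beta_{k-1}$ defining $\mathbf{y}^{N}$ arise.

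The crux, and the step I expect to be most delicate, is the primal sum $\sum_{k=0}^{N-1}\tfrac{1}{\beta_{k}}\big((1+\sigma)\mathcal{P}(x^{k+1})-\sigma\mathcal{P}(x^{k})\big)$: the varying weights $1/\beta_{k}$ make the naive $\sigma$-telescoping used for Theorem~\ref{th2} unavailable. I would instead regroup it by an Abel summation into $\sum_{j=1}^{N}c_{j}\mathcal{P}(x^{j})-\tfrac{\sigma}{\beta_{0}}\mathcal{P}(x^{0})$ with $c_{j}=\tfrac{1+\sigma}{\beta_{j-1}}-\tfrac{\sigma}{\beta_{j}}$ for $j<N$ and $c_{N}=\tfrac{1+\sigma}{\beta_{N-1}}$, and then verify two facts. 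First, $\sum_{j=1}^{N}c_{j}=t_{N}+\tfrac{\sigma}{\beta_{0}}$ and, using the extrapolation identity $\bar{x}^{j}=(1+\sigma)x^{j}-\sigma x^{j-1}$, that $\sum_{j=1}^{N}c_{j}x^{j}=\tfrac{\sigma}{\beta_{0}}x^{0}+\sum_{k=1}^{N}\tfrac{1}{\beta_{k-1}}\bar{x}^{k}=(t_{N}+\tfrac{\sigma}{\beta_{0}})\mathbf{x}^{N}$ --- this is precisely why $\mathbf{x}^{N}$ is written through the extrapolated points $\bar{x}^{k}$. Second, each $c_{j}\ge0$, which reduces to $\beta_{j}/\beta_{j-1}\ge\sigma/(1+\sigma)$, i.e., $\mu\ge\sigma\varrho_{1}$ on the active branch; this mild parameter condition makes the combination genuinely convex, so that Jensen gives $\sum_{j}c_{j}\mathcal{P}(x^{j})\ge(t_{N}+\tfrac{\sigma}{\beta_{0}})\mathcal{P}(\mathbf{x}^{N})$ with $\mathbf{x}^{N}\in\mathbb{X}$. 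Substituting both Jensen bounds, moving $\tfrac{\sigma}{\beta_{0}}\mathcal{P}(x^{0})$ to the right, using $(t_{N}+\tfrac{\sigma}{\beta_{0}})\mathcal{P}(\mathbf{x}^{N})\ge t_{N}\mathcal{P}(\mathbf{x}^{N})$, and dividing by $t_{N}$ then reproduces \eqref{eq26}. The only remaining obstacle is bookkeeping of the $\beta_{0}$-factors so that the boundary constants land in the exact normalization stated in \eqref{eq26}, and confirming that the capped branch of \eqref{ubeta} does not spoil the $\B_{\psi}$-telescoping.
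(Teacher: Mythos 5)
Your proposal is correct and is essentially the paper's own proof: your strengthened per-step estimate is the paper's \eqref{eq31}, your division by $\beta_k$ and telescoping driven by the design identity in \eqref{ubeta} is exactly \eqref{eq32}--\eqref{eq35}, and your Abel-summation/Jensen treatment of the primal sum, with the extrapolation identity converting the coefficients $c_j$ into weights on the points $\bar{x}^k$, is precisely the computation carried out in \eqref{eq36}. The delicate points you flag are genuine but are passed over silently in the paper as well: it never verifies nonnegativity of the Abel coefficients (your condition $\mu\ge\sigma\varrho_1$), it tacitly works on the uncapped branch of \eqref{ubeta}, and the $\beta_0$-bookkeeping indeed does not quite close, since the argument actually yields the bound $\frac{1}{t_N}\bigl(\gamma\B_{\varphi}(\hat{y},y^{0})+\frac{\mu}{\beta_{0}}\B_{\psi}(\hat{x},x^{0})+\frac{\sigma}{\beta_{0}}\mathcal{P}(x^{0})\bigr)$, which coincides with \eqref{eq26} only when $\beta_0=1$.
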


\begin{proof}
	First, by invoking the setting $\tau=\beta_k\gamma$ in \eqref{d} (i.e., \eqref{e}), the inequality \eqref{eq2} becomes
	\begin{equation}\label{eq28}
	\langle\beta_{k} \gamma (\nabla \varphi(y^{k+1}) - \nabla \varphi(y^{k})) -  A \bar{x}^{k+1} ,y-y^{k+1} \rangle \ge g(y^{k+1}) - g(y), \; \forall y \in \mathbb{Y}.
	\end{equation}
	Then, we know from the $\varrho_{1}$-strong convexity of $f$ relative to the Bregman kernel $\psi$ and \eqref{b} that the inequality \eqref{eq3} implies 
	\begin{align}\label{eq29}
	&\langle \mu ( \nabla \psi(x^{k+1}) - \nabla \psi(x^{k})) +  A^\top \tilde{y}^{k+1} ,x- x^{k+1} \rangle \\
	&\hskip 3cm \ge f(x^{k+1}) - f(x) + \varrho_{1} \B_{\psi}(x,x^{k+1}). \nn
	\end{align}
	Based on Lemma \ref{le1}, combining \eqref{eq1}, \eqref{eq28} and \eqref{eq29} yields
	\begin{align}\label{eq31}
	&\beta_{k} \gamma \B_{\varphi}(\hat{y},y^{k+1}) + (\mu + \varrho_{1})\B_{\psi}(\hat{x},x^{k+1}) + \mathcal{D}(\tilde{y}^{k+1}) +  (\sigma +1) \mathcal{P}(x^{k+1}) \nonumber \\
&	\le \beta_{k} \gamma \B_{\varphi}(\hat{y},y^{k}) + \mu\B_{\psi}(\hat{x},x^{k}) + \sigma \mathcal{P}(x^{k})  -\beta_{k} \gamma\B_{\varphi}(y^{k+1},y^{k})  \nonumber \\  
	&\quad  - ( \sigma \mu + \sigma \varrho_{1} )\B_{\psi}(x^{k},x^{k+1}) - \mu(1+\sigma) \B_{\psi}(x^{k+1},x^{k}) - \gamma  \B_{\phi}(y^{k+1},\tilde{y}^{k+1})   \nonumber \\
	&\quad   + (1+\sigma)\langle  A (x^{k+1} - x^{k}) ,y^{k+1}- \tilde{y}^{k+1} \rangle - \gamma \B_{\phi}(\tilde{y}^{k+1},y^{k}) + \gamma \B_{\phi}(y^{k+1},y^{k}) \nonumber \\
	&\le \beta_{k} \gamma \B_{\varphi}(\hat{y},y^{k}) + \mu\B_{\psi}(\hat{x},x^{k}) + \sigma   \mathcal{P}(x^{k}).
	\end{align}
	By using \eqref{ubeta}, the first two terms on the left-hand side of \eqref{eq31} implies that
	\begin{align}\label{eq32}
	&\beta_{k} \gamma \B_{\varphi}(\hat{y},y^{k+1}) + (\mu + \varrho_{1})\B_{\psi}(\hat{x},x^{k+1})   \nonumber \\ 
	&=  \frac{\beta_{k}}{\beta_{k+1}}\beta_{k+1}\gamma \B_{\varphi}(\hat{y},y^{k+1}) + \frac{\beta_{k}}{\beta_{k+1}}\mu\B_{\psi}(\hat{x},x^{k+1})  \nonumber \\ 
	&=  \frac{\beta_{k}}{\beta_{k+1}}\left( \beta_{k+1}\gamma \B_{\varphi}(\hat{y},y^{k+1}) + \mu\B_{\psi}(\hat{x},x^{k+1})\right)  \nn \\
	&\le  \beta_{k} \gamma \B_{\varphi}(\hat{y},y^{k}) + \mu\B_{\psi}(\hat{x},x^{k}) + \sigma  \mathcal{P}(x^{k}) -  \mathcal{D}(\tilde{y}^{k+1}) -  (\sigma + 1) \mathcal{P}(x^{k+1}).
	\end{align} 
	To simplify the notation, we let 
	\begin{equation*}
	\left\{
	\begin{aligned}
	&\widehat{\bm{a}}_{k} := \beta_{k}\gamma \B_{\varphi}(\hat{y},y^{k}) + \mu\B_{\psi}(\hat{x},x^{k}) , \\
	&\widehat{\bm{c}}_{k} :=\sigma  \mathcal{P}(x^{k}) -  \mathcal{D}(\tilde{y}^{k+1}) -  (\sigma + 1) \mathcal{P}(x^{k+1}) .
	\end{aligned}\right.
	\end{equation*} 
	Then, \eqref{eq32} can be simplified as
	\begin{equation*}
	\frac{\beta_{k}}{\beta_{k+1}}\widehat{\bm{a}}_{k+1} \le \widehat{\bm{a}}_{k} + \widehat{\bm{c}}_{k}, 
	\end{equation*}
	or equivalently,
	\begin{equation}\label{eq33}
	\frac{1}{\beta_{k+1}} \widehat{\bm{a}}_{k+1} - \frac{1}{\beta_{k}} \widehat{\bm{c}}_{k} \le \frac{1}{\beta_{k}} \widehat{\bm{a}}_{k}. 
	\end{equation}
	Now, summing up \eqref{eq33} from $0$ to $N-1$ arrives at
	\begin{equation}\label{eq35}
	\frac{1}{\beta_{N}} \widehat{\bm{a}}_{N} - \sum_{k=0}^{N-1} \frac{1}{\beta_{k}} \widehat{\bm{c}}_{k} \le \frac{1}{\beta_{0}} \widehat{\bm{a}}_{0}. 
	\end{equation}
	Similar to (\ref{eq24}), by convexity of $\mathcal{P}$ and $\mathcal{D}$, we get
	\begin{align}\label{eq36}
	-\sum_{k=0}^{N-1}  \frac{1}{\beta_{k}} \widehat{\bm{c}}_{k} 
	 &=\sum_{k=0}^{N-1}  \frac{1}{\beta_{k}} \mathcal{D}(\tilde{y}^{k+1}) +  \frac{1}{\beta_{k}} (1+\sigma)\mathcal{P}(x^{k+1}) -  \frac{1}{\beta_{k}} \sigma  \mathcal{P}(x^{k}) \nonumber \\
	&=  -  \frac{1}{\beta_{0}} \sigma  \mathcal{P}(x^{0}) +  \frac{1}{\beta_{N-1}} (1+\sigma)\mathcal{P}(x^{N}) + \sum_{k=0}^{N-1}  \frac{1}{\beta_{k}} \mathcal{D}(\tilde{y}^{k+1}) \nn \\
	&\quad + \sum_{k=1}^{N-1}  \left( \frac{1}{\beta_{k-1}}  (1+\sigma) - \frac{1}{\beta_{k}} \sigma\right) \mathcal{P}(x^{k})  \nonumber \\
	&\ge  \left( \frac{1}{\beta_{0}} \sigma + \sum_{k=0}^{N-1}  \frac{1}{\beta_{k}} \right)  \mathcal{P}\left(\frac{\frac{1}{\beta_{0}} \sigma x^{0} + \sum_{k=0}^{N-1}  \frac{1}{\beta_{k}} \bar{x}^{k+1} }{\frac{1}{\beta_{0}} \sigma + \sum_{k=0}^{N-1} \frac{1}{\beta_{k}} }\right)  \nn \\
	&\quad + \left(\sum_{k=0}^{N-1}  \frac{1}{\beta_{k}}\right) \mathcal{D}\left(\frac{\sum_{k=0}^{N-1}  \frac{1}{\beta_{k}} \tilde{y}^{k+1}}{\sum_{k=0}^{N-1}  \frac{1}{\beta_{k}}}\right)-  \frac{1}{\beta_{0}} \sigma  \mathcal{P}(x^{0})   \nonumber \\
	&\ge  \left( \sum_{k=1}^{N} \frac{1}{\beta_{k-1}} \right)  \left(\mathcal{P}\left(\frac{  \sigma x^{0} + \beta_{0} \sum_{k=1}^{N} \frac{1}{\beta_{k-1}} \bar{x}^{k} }{  \sigma + \beta_{0}\sum_{k=1}^{N} \frac{1}{\beta_{k-1}} }\right) +  \mathcal{D}\left(\frac{\sum_{k=1}^{N} \frac{1}{\beta_{k-1}} \tilde{y}^{k}}{\sum_{k=1}^{N} \frac{1}{\beta_{k-1}}}\right)\right)        \nonumber \\
	&\quad -  \frac{1}{\beta_{0}} \sigma  \mathcal{P}(x^{0})  \nn \\
	&=  t_{N}  (\mathcal{P}(\mathbf{x}^{N}) +  \mathcal{D}(\mathbf{y}^{N})) -  \frac{1}{\beta_{0}} \sigma  \mathcal{P}(x^{0}).
	\end{align}
	Consequently, substituting \eqref{eq36} into \eqref{eq35} yields
	\begin{equation*}
	\mathcal{G}(\mathbf{x}^{N},\mathbf{y}^{N}) = \mathcal{P}(\mathbf{x}^{N}) +  \mathcal{D}(\mathbf{y}^{N}) \le  \frac{1}{t_{N}} \left( \beta_{0} \gamma \B_{\varphi}(\hat{y},y^{0}) + \mu\B_{\psi}(\hat{x},x^{0}) +  \frac{1}{\beta_{0}} \sigma  \mathcal{P}(x^{0})\right).
	\end{equation*}
The proof is complete.
\end{proof}

\begin{remark}
		Since $\frac{1}{\beta_{k+1}}  = \frac{\mu+\varrho_{1}}{\mu}   \frac{1}{\beta_{k}} $ and $\frac{1}{\beta_{k}} < 2$, we can conclude through induction that the value of $\frac{1}{\beta_{k}}$ does not change after a finite number of $k$ iterations. Thus, $t_{N} = \sum_{k=1}^{N} \frac{1}{\beta_{k+1}} = O(N)$. Then, it follow from \eqref{eq26} that there exists some constant $ C > 0$ such that
	\begin{equation*}
	\mathcal{G}(\mathbf{x}^{N},\mathbf{y}^{N}) \le  \frac{C}{N},
	\end{equation*} 
	which means that Algorithm \ref{alg2} also enjoys $O(1/N)$ convergence rate. In this situation, it is possible to obtain a tighter upper bound than Theorem \ref{th2} under an appropriate setting on the $\beta_k$.
\end{remark}

\subsection{When $f$ and $g$ are strongly convex relative to $\psi$ and $\varphi$, respectively.} 
In this subsection, we first assume that $f$ is $\varrho_1$-strongly convex relative to the Bregman kernel $\psi$ and $g$ is $\varrho_2$-strongly convex relative to the Bregman kernel $\varphi$. Then, with two specifications on $\mu$ and $\tau$, i.e., $\mu=\frac{\varrho_{1}}{\omega -1}$ and $\tau=\frac{\varrho_{2}}{\omega -1}$ with $\omega >1$ being a given constant, we show that Algorithm \ref{alg1} can be accelerated with $O(1/\omega^N)$ convergence rate. We first present details of the accelerated version of TBDA in Algorithm \ref{alg3}.

\begin{algorithm}[!h]
	\caption{Accelerated TBDA for \eqref{sdp} with strongly convex $f$ and  and $g$ relative to $\psi$ and $\varphi$, respectively.}\label{alg3}
	\begin{algorithmic}[1]
		\STATE Choose starting points $(x^{0},y^{0}) \in \mathbb{X}\times  \mathbb{Y}$ and parameters $\gamma >0$ and $\omega >1$. 
		\REPEAT
		\STATE  Update $\tilde{y}^{k+1}$  via \eqref{a}.
		\STATE  Update $x^{k+1}$ via \eqref{b} with setting $\mu = \frac{\varrho_{1}}{\omega-1} $.
		\STATE  Update $\bar{x}^{k+1}$ via \eqref{c}.
		\STATE  Update $y^{k+1}$ via \eqref{d} with setting $\tau = \frac{\varrho_{2}}{\omega-1} $.
		\UNTIL some stopping criterion is satisfied.
		\RETURN an approximate saddle point $(\hat{x},\hat{y})$.
	\end{algorithmic}
\end{algorithm}

Below, we establish the similar result to Theorems \ref{th2} and \ref{thm3} for Algorithm \ref{alg3}.

\begin{theorem}
	Let $\{(\hat{x},\hat{y})\}$ be a saddle point of \eqref{sdp}, and let $\{(\tilde y^{k+1},x^{k+1},{y}^{k+1}) \}$ be a sequence generated by Algorithm \ref{alg3} from any initial point $(x^0, y^0)$. If there exist positive constants $c_{1}$, $c_{2}$, $c_{3}$  such that   
	\begin{align*}\label{dsc}
	& \mu(1+\sigma) \B_{\psi}(x^{k+1},x^{k})+  (\sigma \mu + \varrho_{1} \sigma) \B_{\psi}(x^{k},x^{k+1}) + \gamma \B_{\phi}(y^{k+1},\tilde{y}^{k+1})  \nn  \\
	&\quad + \varrho_{2} \B_{\varphi}(y^{k+1},\tilde{y}^{k+1}) + \gamma \B_{\phi}(\tilde{y}^{k+1},y^{k}) + \tau\B_{\varphi}(y^{k+1},y^{k}) - \gamma \B_{\phi}(y^{k+1},y^{k}) \nn \\ 
	&\quad -(1+\sigma)\langle  A (x^{k+1} - x^{k}) ,y^{k+1} - \tilde{y}^{k+1} \rangle \nn \\
	&\ge  c_{1} \B_{\psi}(x^{k+1},x^{k}) +  c_{2}\B_{\phi}(y^{k+1},\tilde{y}^{k+1}) + c_{3} \B_{\phi}(\tilde{y}^{k+1},y^{k}).
	\end{align*}
	Then, we have
	\begin{equation}\label{eq27}
	\mathcal{G}(\mathbf{x}^{N},\mathbf{y}^{N})  \le  \frac{1}{\omega^{N-2}}\left( \tau  \B_{\varphi}(\hat{y},y^{0}) +  \mu \B_{\psi}(\hat{x},x^{0}) +    \sigma  \mathcal{P}(x^{0})\right).
	\end{equation}
	where $N$ is a positive integer, 
	$$\mathbf{x}^{N} := \frac{ \sigma x^{0} + \sum_{k=1}^{N} \omega^{k-1} \bar{x}^{k} }{  \sigma + \sum_{k=1}^{N} \omega^{k-1}} \quad \text{and}\quad \mathbf{y}^{N} := \frac{\sum_{k=1}^{N} \omega^{k-1} \tilde{y}^{k}}{\sum_{k=1}^{N} \omega^{k-1}}$$ with a given constant $\omega>1$.
\end{theorem}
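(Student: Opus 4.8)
The plan is to follow the architecture of the proof of Theorem \ref{thm3} verbatim, changing only two things: every optimality inequality is sharpened using the relative strong convexity of $f$ and $g$, and the harmonic weights $1/\beta_{k-1}$ are replaced by the geometric weights $\omega^{k-1}$. First I would upgrade the three optimality conditions \eqref{eq1}--\eqref{eq3}. Because $f$ is $\varrho_1$-strongly convex relative to $\psi$, Definition \ref{def:RSC} adds a term $\varrho_1\B_\psi(\hat x,x^{k+1})$ when \eqref{eq3} is evaluated at $x=\hat x$, and a term $\sigma\varrho_1\B_\psi(x^k,x^{k+1})$ in the $\sigma$-scaled copy taken at $x=x^k$. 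Because $g$ is $\varrho_2$-strongly convex relative to $\varphi$, the same mechanism applied to \eqref{eq2} at $y=\hat y$ contributes $\varrho_2\B_\varphi(\hat y,y^{k+1})$, while \eqref{eq1} at $y=y^{k+1}$ contributes $\varrho_2\B_\varphi(y^{k+1},\tilde y^{k+1})$. Feeding these four reinforced inequalities through the bookkeeping of Lemma \ref{le1} and then invoking the assumed inequality to discard the nonnegative residual $c_1\B_\psi(x^{k+1},x^k)+c_2\B_\phi(y^{k+1},\tilde y^{k+1})+c_3\B_\phi(\tilde y^{k+1},y^k)$ should yield the one-step estimate
$$(\tau+\varrho_2)\B_\varphi(\hat y,y^{k+1})+(\mu+\varrho_1)\B_\psi(\hat x,x^{k+1})+\mathcal D(\tilde y^{k+1})+(\sigma+1)\mathcal P(x^{k+1})\le\tau\B_\varphi(\hat y,y^k)+\mu\B_\psi(\hat x,x^k)+\sigma\mathcal P(x^k).$$

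The decisive move is the parameter choice $\mu=\varrho_1/(\omega-1)$ and $\tau=\varrho_2/(\omega-1)$, which force $\mu+\varrho_1=\omega\mu$ and $\tau+\varrho_2=\omega\tau$. Writing $\widetilde{\bm a}_k:=\tau\B_\varphi(\hat y,y^k)+\mu\B_\psi(\hat x,x^k)$, the estimate collapses to $\omega\widetilde{\bm a}_{k+1}+\mathcal D(\tilde y^{k+1})+(\sigma+1)\mathcal P(x^{k+1})\le\widetilde{\bm a}_k+\sigma\mathcal P(x^k)$. Multiplying by $\omega^k$ and summing over $k=0,\dots,N-1$ telescopes the $\omega^k\widetilde{\bm a}_k$ terms, and dropping the nonnegative tail $\omega^N\widetilde{\bm a}_N$ leaves
$$\sum_{k=0}^{N-1}\omega^k\bigl(\mathcal D(\tilde y^{k+1})+(\sigma+1)\mathcal P(x^{k+1})-\sigma\mathcal P(x^k)\bigr)\le\widetilde{\bm a}_0.$$

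The hard part, exactly as in \eqref{eq24} and \eqref{eq36}, is converting this weighted sum into a single primal-dual gap by convexity. I would reindex and collect the $\mathcal P$-coefficients into $\omega^{N-1}(1+\sigma)\mathcal P(x^N)+\sum_{k=1}^{N-1}\bigl(\omega^{k-1}(1+\sigma)-\omega^k\sigma\bigr)\mathcal P(x^k)-\sigma\mathcal P(x^0)$, and then exploit the algebraic identity $\sigma x^0+\sum_{k=1}^N\omega^{k-1}\bar x^k=(1+\sigma)\omega^{N-1}x^N+\sum_{k=1}^{N-1}\bigl(\omega^{k-1}(1+\sigma)-\omega^k\sigma\bigr)x^k$, which is immediate from $\bar x^k=(1+\sigma)x^k-\sigma x^{k-1}$. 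Since the weights on the right of this identity are precisely the collected $\mathcal P$-coefficients, Jensen's inequality for the convex map $\mathcal P$ (and the parallel weighted average for the convex $\mathcal D$) contracts the sum to $\bigl(\sum_{k=1}^N\omega^{k-1}\bigr)\mathcal G(\mathbf x^N,\mathbf y^N)-\sigma\mathcal P(x^0)$, after discarding the surplus term $\sigma\mathcal P(\mathbf x^N)\ge0$. Rearranging against $\widetilde{\bm a}_0$ and using $\sum_{k=1}^N\omega^{k-1}=\frac{\omega^N-1}{\omega-1}\ge\omega^{N-2}$ then yields \eqref{eq27}.

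The step I would scrutinize most is the legitimacy of the Jensen inequality, which requires every collected weight $\omega^{k-1}(1+\sigma)-\omega^k\sigma=\omega^{k-1}\bigl(1-\sigma(\omega-1)\bigr)$ to be nonnegative. This holds automatically for $\sigma=0$ and otherwise imposes a bound on the extrapolation, precisely mirroring the nonnegativity needed for the analogous coefficients $\tfrac{1+\sigma}{\beta_{k-1}}-\tfrac{\sigma}{\beta_k}$ in the proof of Theorem \ref{thm3}. I expect the remaining manipulations (assembling the reinforced inequalities into the one-step estimate and verifying the identity) to be routine bookkeeping, so the nonnegativity of these weights and the correct identification of $\mathbf x^N$ as the $\bar x^k$-weighted average are where the real care is required.
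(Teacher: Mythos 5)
Your proposal follows essentially the same route as the paper's own proof: strengthening the three optimality conditions with the relative strong-convexity terms, deriving the one-step estimate and discarding the residual via the assumed inequality, exploiting $\mu+\varrho_1=\omega\mu$ and $\tau+\varrho_2=\omega\tau$ to obtain the geometric contraction, telescoping with weights $\omega^k$, and applying Jensen through the identity $\sigma x^0+\sum_{k=1}^N\omega^{k-1}\bar{x}^k=(1+\sigma)\omega^{N-1}x^N+\sum_{k=1}^{N-1}\left(\omega^{k-1}(1+\sigma)-\omega^k\sigma\right)x^k$. Your explicit attention to the nonnegativity of the Jensen weights $\omega^{k-1}\left(1-\sigma(\omega-1)\right)$ is a condition the paper leaves implicit, and your closing estimate $\sum_{k=1}^N\omega^{k-1}=\frac{\omega^N-1}{\omega-1}\ge\omega^{N-2}$ is a cleaner justification than the paper's chain of inequalities.
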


\begin{proof}
	Similar to \eqref{eq1}, \eqref{eq2} and \eqref{eq3}, under the conditions that $f$ is $\varrho_1$-strongly convex relative to $\psi$ and $g$ is $\varrho_2$-strongly convex relative to $\varphi$, it follows from the iterative schemes of Algorithm \ref{alg3} that 
	\begin{equation}\label{eq37}
	\left\{ \begin{aligned}
	& \langle \gamma \left(\nabla \phi(\tilde{y}^{k+1}) - \nabla \phi(y^{k})\right) -  A x^{k} ,y-\tilde{y}^{k+1} \rangle \\
	&\hskip 5cm \ge g(\tilde{y}^{k+1}) - g(y) + \varrho_{2} \B_{\varphi}(y,\tilde{y}^{k+1}), \\
	&\langle \tau \left(\nabla \varphi(y^{k+1}) - \nabla \varphi(y^{k})\right) -  A \bar{x}^{k+1} ,y-y^{k+1} \rangle  \\
	&\hskip 5cm \ge g(y^{k+1}) - g(y) +\varrho_{2} \B_{\varphi}(y,y^{k+1}), \\
	&\langle \mu  \left(\nabla \psi(x^{k+1}) - \nabla \psi(x^{k})\right) +  A^\top \tilde{y}^{k+1} ,x- x^{k+1} \rangle \\
	&\hskip 5cm \ge  f(x^{k+1}) - f(x) + \varrho_{1} \B_{\psi}(x,x^{k+1}). 
	\end{aligned}
	\right.
	\end{equation}
	holds for all $y\in\mathbb{Y}$ and $x\in\mathbb{X}$. By following the analysis of Lemma \ref{le1}, it follows from \eqref{eq37} that
	\begin{align*}
	& (\tau + \varrho_{2}) \B_{\varphi}(\hat{y},y^{k+1}) +  (\mu + \varrho_{1})\B_{\psi}(\hat{x},x^{k+1}) + \mathcal{D}(\tilde{y}^{k+1}) + (\sigma +1) \mathcal{P}(x^{k+1}) \nonumber \\
	&=  \tau \B_{\varphi}(\hat{y},y^{k}) + \mu\B_{\psi}(\hat{x},x^{k}) + \sigma   \mathcal{P}(x^{k}) - \tau\B_{\varphi}(y^{k+1},y^{k}) + \gamma \B_{\phi}(y^{k+1},y^{k})    \nonumber \\  
	&\quad - \gamma \B_{\phi}(\tilde{y}^{k+1},y^{k})  - (\sigma \mu + \varrho_{1} \sigma)\B_{\psi}(x^{k},x^{k+1})  - \mu(1+\sigma) \B_{\psi}(x^{k+1},x^{k})     \nonumber \\
	&\quad - \gamma \B_{\phi}(y^{k+1},\tilde{y}^{k+1}) - \varrho_{2} \B_{\varphi}(y^{k+1},\tilde{y}^{k+1}) +(1+\sigma)\langle  A (x^{k+1} - x^{k}) ,y^{k+1}- \tilde{y}^{k+1} \rangle     \nonumber \\
 &	\le  \tau \B_{\varphi}(\hat{y},y^{k}) + \mu\B_{\psi}(\hat{x},x^{k}) + \sigma   \mathcal{P}(x^{k}),
	\end{align*}
	where, by recalling $\mu=\frac{\varrho_{1}}{\omega -1}$ and $\tau=\frac{\varrho_{2}}{\omega -1}$,  its first two terms imply that
	\begin{align}\label{eq42}
	&(\tau + \varrho_{2}) \B_{\varphi}(\hat{y},y^{k+1}) +  (\mu + \varrho_{1})\B_{\psi}(\hat{x},x^{k+1})  \nonumber \\ 
	&=  \omega \left(\tau \B_{\varphi}(\hat{y},y^{k+1}) +  \mu \B_{\psi}(\hat{x},x^{k+1})\right)  \nn \\
	&\le  \left(\tau \B_{\varphi}(\hat{y},y^{k}) + \mu\B_{\psi}(\hat{x},x^{k}) \right)+ \sigma \mathcal{P}(x^{k}) - \mathcal{D}(\tilde{y}^{k+1}) - (1+\sigma)\mathcal{P}(x^{k+1}). 
	\end{align} 
	To simplify the notation, we let 
		\begin{equation*}
	\left\{
	\begin{aligned}
	&\widetilde{\bm{a}}_{k} := \tau \B_{\varphi}(\hat{y},y^{k}) + \mu\B_{\psi}(\hat{x},x^{k}) , \\
	&\widetilde{\bm{c}}_{k} :=\sigma \mathcal{P}(x^{k}) -  \mathcal{D}(\tilde{y}^{k+1}) -  (\sigma + 1) \mathcal{P}(x^{k+1}).
	\end{aligned}\right.
	\end{equation*} 
	As a consequence, by multiplying $\omega^{k}$ on both sides of \eqref{eq42}, we immediately have 
	\begin{equation}\label{eq43}
	\omega^{k+1} \widetilde{\bm{a}}_{k+1} \le \omega^{k} \widetilde{\bm{a}}_{k} +  \omega^{k} \widetilde{\bm{c}}_{k}. 
	\end{equation}
   Hence, summing up \eqref{eq43} from $0$ to $N-1$, we obtain
	\begin{equation}\label{eq44}
	\omega^{N} \widetilde{\bm{a}}_{N} - \sum_{k=0}^{N-1} \omega^{k} \widetilde{\bm{c}}_{k}  \le  \omega^{0}\widetilde{\bm{a}}_{0}\equiv \widetilde{\bm{a}}_{0}.
	\end{equation}
	Similar to \eqref{eq24}, we get
	\begin{align}\label{eq46}
	-\sum_{k=0}^{N-1}  \omega^{k} \widetilde{\bm{c}}_{k} 
	&= \sum_{k=0}^{N-1}  \omega^{k} \mathcal{D}(\tilde{y}^{k+1}) +  \omega^{k} (1+\sigma)\mathcal{P}(x^{k+1}) -  \omega^{k} \sigma  \mathcal{P}(x^{k}) \nonumber \\
	&=  -  \sigma  \mathcal{P}(x^{0}) +  \frac{1}{\omega^{N-1}} (1+\sigma)\mathcal{P}(x^{N}) + \sum_{k=0}^{N-1}  \omega^{k} \mathcal{D}(\tilde{y}^{k+1}) \nn \\
	&\quad + \sum_{k=1}^{N-1}  \left(\omega^{k-1} (1+\sigma) - \omega^{k} \sigma\right) \mathcal{P}(x^{k})  \nonumber \\
	 &\ge \left(  \sigma + \sum_{k=0}^{N-1}  \omega^{k} \right)  \mathcal{D}\left(\frac{ \sigma x^{0} + \sum_{k=0}^{N-1}  \omega^{k} \bar{x}^{k+1} }{ \sigma + \sum_{k=0}^{N-1} \omega^{k} }\right) \nn \\
	&\quad  + \left(\sum_{k=0}^{N-1}  \omega^{k}\right) \mathcal{P}\left(\frac{\sum_{k=0}^{N-1}  \omega^{k} \tilde{y}^{k+1}}{\sum_{k=0}^{N-1}  \omega^{k}}\right)- \sigma  \mathcal{P}(x^{0})   \nonumber \\
	&\ge   \left( \sum_{k=1}^{N} \omega^{k-1} \right)  \left(\mathcal{P}\left(\frac{ \sigma x^{0} + \sum_{k=1}^{N} \omega^{k-1} \bar{x}^{k} }{  \sigma + \sum_{k=1}^{N} \omega^{k-1}} \right) +  \mathcal{D}\left(\frac{\sum_{k=1}^{N} \omega^{k-1} \tilde{y}^{k}}{\sum_{k=1}^{N} \omega^{k-1}}\right)\right) \nn \\
	&\quad -   \sigma  \mathcal{P}(x^{0})   \nonumber \\
	 &= \frac{\omega^{N} - 1}{\omega - 1}  \left(\mathcal{P}(\mathbf{x}^{N}) +  \mathcal{D}(\mathbf{y}^{N})\right) -   \sigma  \mathcal{P}(x^{0}).
	\end{align}
	By substituting \eqref{eq46} into \eqref{eq44}, we arrive at
	\begin{align}\label{eq:thm4.2-1}
	\mathcal{G}(\mathbf{x}^{N},\mathbf{y}^{N}) &= \mathcal{P}(\mathbf{x}^{N}) +  \mathcal{D}(\mathbf{y}^{N}) \nn \\
	& \le  \frac{\omega - 1}{\omega^{N} - 1 } \left(\tau \B_{\varphi}(\hat{y},y^{0})+\mu\B_{\psi}(\hat{x},x^{0}) +   \sigma  \mathcal{P}(x^{0})\right).
	\end{align}
	By utilizing the fact that $\omega > 1$, it is easy to see that
	\begin{equation*}
	\frac{\omega - 1}{\omega^{N} - 1} \le \frac{\omega}{\omega^{N} - 1} \le \frac{\omega}{\omega^{N} - \omega} \le \frac{1}{\omega^{N-1} - 1} \le \frac{1}{\omega^{N-2}},
	\end{equation*}
	which, together with \eqref{eq:thm4.2-1}, implies that
	\begin{equation*}
	\mathcal{G}(\mathbf{x}^{N},\mathbf{y}^{N})  \le  \frac{1}{\omega^{N-2}}\left(\tau \B_{\varphi}(\hat{y},y^{0})+\mu\B_{\psi}(\hat{x},x^{0}) +   \sigma  \mathcal{P}(x^{0})\right).
	\end{equation*}
	The proof is complete.
\end{proof}

\begin{remark}
	The inequality \eqref{eq27} means that Algorithm \ref{alg3} shares the same $O(1/\omega^N)$ linear convergence with the PDHG \cite{CP11} when the objective functions $f$ and $g$ are strongly convex relative to $\psi$ and $\varphi$, respectively.
\end{remark}

\section{Numerical experiments}\label{Sec:numexp}
To assess the reliability of our balanced approach in enhancing the efficiency of solving saddle point problems, in this section, we consider two classical convex optimization problems with readily solvable dual subproblems.  Specifically, we report the numerical performance of Algorithm \ref{alg1} (denoted by TBDA) on solving quadratic optimization problems and robust principal component analysis (RPCA). We implement all algorithms in Matlab 2023a, and all experiments are conducted on a 64-bit Windows PC with an Intel(R) Core(TM) i5-12500h CPU@3.000GHz and 32GB RAM.

\subsection{Quadratic optimization problem}\label{sec-qov}
Consider the following convex quadratic optimization problem:
\begin{equation}\label{QO}
\begin{aligned}
\min_{x } \;\; & \frac{1}{2}x^{\top} Q x + q^{\top} x  \\
{\rm s.t.}\;\; & Ax \leq b, \; x \ge 0,
\end{aligned}
\end{equation}
where $Q \in \R^{n \times n}$ is a positive definite matrix, $q \in \R^{n}$, $A \in \R^{m \times n}$, and $b \in \R^{m}$. 
Obviously, we can reformulate \eqref{QO} as a saddle point problem, i.e.,
\begin{equation*}
\min_{x \ge 0}\max_{y \ge 0} \left\{\; \L(x,y):= \frac{1}{2}x^{\top} Q x + q^{\top} x + \langle Ax,y \rangle -\langle b,y \rangle\; \right\}.
\end{equation*}
To generate the data of \eqref{QO}, we first let $Q = S^{\top}S + 2I_{n}$, where $S \in \R^{n \times n}$ is a random matrix generated by the Matlab script ``{\sffamily S=rand(n,n)}'' and $I_n$ is an $n\times n$ identity matrix. In this way, we can guarantee the positive definiteness of $Q$. Then, we randomly generate $A$ by the Matlab script ``{\sffamily A=rand(m,n)}''.  Now, we turn our attention to generating $q \in \R^{n}$ and $b \in \R^{m}$. Here, we first randomly generate two nonnegative sparse vector $x^\star$ and $y^\star$ by the Matlab scripts ``{\sffamily x\_star=sprand(n,1,0.4)}'' and ``{\sffamily y\_star=sprand(m,1,0.3)}'', respectively. Then, we let $q=-Qx^\star-A^\top y^\star$ and $b=Ax^\star +{\bm \varepsilon}^\diamond$, where ${\bm \varepsilon}^\diamond\in \mathbb{R}^m$ is also random nonnegative vectors satisfying  $\langle y^\star, {\bm \varepsilon}^\diamond\rangle =0$, respectively. By the first-order optimality condition, we can easily verify that $x^\star$ and $y^\star$ are optimal solutions to \eqref{QO}.

Note that the quadratic objective function of \eqref{QO} is strongly convex. We compare our TBDA and Algorithm \ref{alg2} (denoted by ITBDA) with the PDHG \cite{CP11} (see \eqref{pdhg}). For simplicity, we take the Bregman kernel functions for $x$- and $y$-subproblems as $\psi(x)=\frac{1}{2}\|x\|_{rI_{n}-\mu Q}^2$  and $\phi(y)=\varphi(y)=\frac{1}{2}\|y\|^2$. Thus, we set $r = \mu \lambda_{\max}(Q)+3$ and $\bar{\lambda} = \|rI_{n}-\mu Q\|$. In addition, there is a parameter $\theta$ in \eqref{c6}, we consider three choices $\theta =\{\frac{2}{3}, 1, 2\}$ for TBDA, and denote them by TBDA($\theta=\frac{2}{3}$), TBDA($\theta=1$) and TBDA($\theta=2$), respectively. For the algorithmic parameters under convergence-guaranteeing conditions, we set  $(\gamma,\mu,\sigma) = (\bar{\lambda}\|A\|,\|A\|, 1)$ for PDHG, $(\gamma,\mu,\tau,\sigma) = (4\bar{\lambda}\|A\|,\|A\|,\frac{8}{3}\bar{\lambda}\|A\|,1)$ for TBDA$(\theta = \frac{2}{3})$, $(\gamma,\mu,\tau,\sigma) = (\frac{3}{2}\bar{\lambda}\|A\|,\frac{8}{9}\|A\|,\frac{3}{2}\bar{\lambda}\|A\|,1)$ for TBDA($\theta = 1$), $(\gamma,\mu,\tau,\sigma) = (\frac{8}{7}\bar{\lambda}\|A\|,\frac{7}{9}\|A\|,\frac{16}{7}\bar{\lambda}\|A\|,1)$ for TBDA($\theta = 2$), and $(\gamma,\mu,\tau,\sigma)$ =  $(\bar{\lambda}\|A\|,\frac{2}{3}\|A\|,2\bar{\lambda}\|A\|,1)$ for ITBDA with $p=1.5$.

In our experiments, we consider nine scenarios on the problem's size, i.e., $(m,n) = (256i,512i)$ with $i = 2,\ldots,10$. Due to the randomness of data, we report the average results of $10$ trials for each scenario. On the other hand, since the optimal solutions are known, we employ
\begin{equation*}
\text{Tol}:=\frac{\|(x^{k+1},y^{k+1})-(x^{*},y^{*})\|}{\|(x^{*},y^{*})\|}\leq 10^{-6}
\end{equation*}
as the stopping criterion for all algorithms. We report the number of iterations (Iter) and computing time in seconds (Time) in Table \ref{tab1}.

\begin{figure}[htbp]
	\centering
	\includegraphics[width=0.495\linewidth]{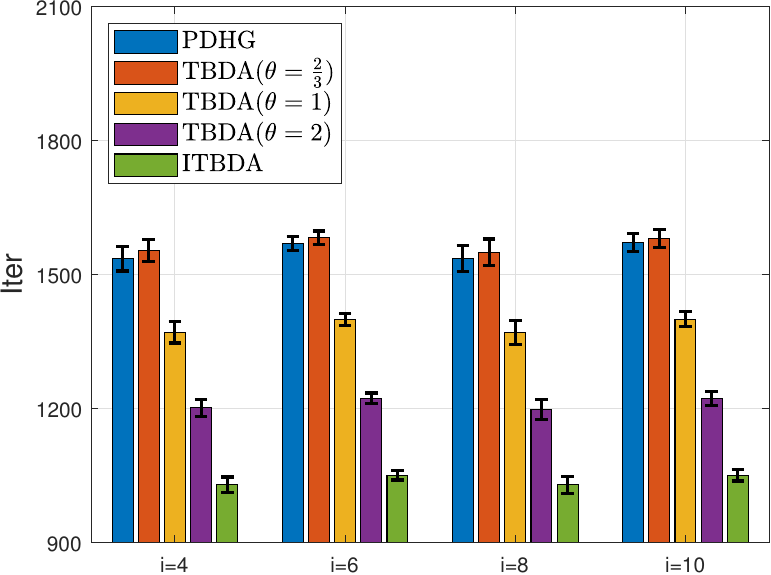}
	\includegraphics[width=0.48\linewidth]{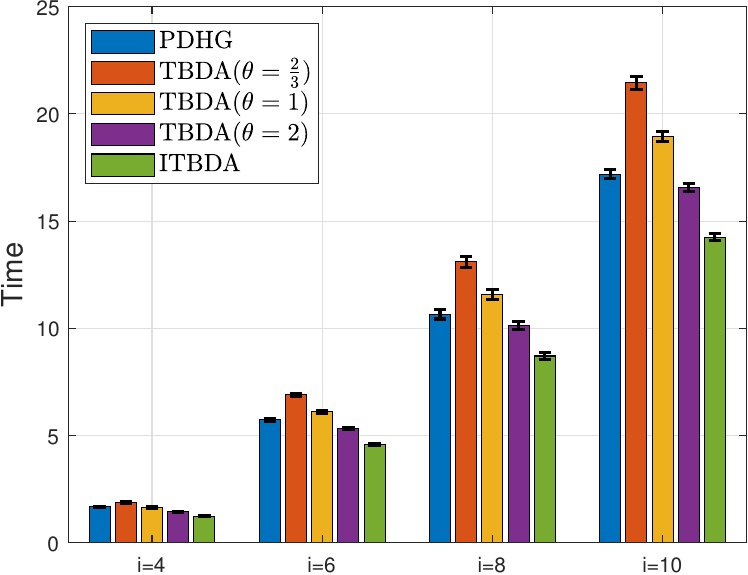}
	\caption{Numerical comparison between the PDHG and our algorithm for \eqref{QO}, where the 
		bars represent the average results of 10 random trials and the line segments represent the standard deviation showing the stability of the algorithms.}\label{Fig1}
\end{figure}

\begin{table}
	\centering
	\caption{Computational results for quadratic optimization problem.}\label{tab1}
	\scriptsize\begin{tabular}{c c c c c c c c c c c c c c c c c c}\toprule
		\multirow{2}{*}{\shortstack{  $i$}}&\multicolumn{2}{c}{PDHG}&&\multicolumn{2}{c}{TBDA($\theta = \frac{2}{3}$)}&&\multicolumn{2}{c}{TBDA($\theta = 1$)}&&\multicolumn{2}{c}{TBDA($\theta = 2$)}&&\multicolumn{2}{c}{ITBDA}  \\
		\cmidrule{2-3}\cmidrule{5-6}\cmidrule{8-9}\cmidrule{11-12}\cmidrule{14-15}\cmidrule{16-18}
		&Iter &Time &&Iter &Time &&Iter &Time &&Iter &Time &&Iter &Time  \\
		\hline
		2 & 1497.3 & 0.15&& 1521.0 & 0.17&& 1339.7 & 0.16&& 1177.0 & 0.14&& 1007.2 & 0.12 \\ 
		3& 1527.7 & 0.66&& 1539.6 & 0.74&& 1361.4 & 0.65&& 1189.5 & 0.57&& 1022.0 & 0.49 \\ 
		4& 1535.5 & 1.68&& 1554.1 & 1.87&& 1370.8 & 1.64&& 1201.8 & 1.44&& 1029.6 & 1.23 \\ 
		5& 1569.2 & 3.26&& 1580.5 & 3.77&& 1398.7 & 3.34&& 1222.0 & 2.91&& 1050.2 & 2.51 \\ 
		6& 1570.8 & 5.73&& 1582.8 & 6.91&& 1399.8 & 6.10&& 1223.6 & 5.32&& 1050.6 & 4.56 \\ 
		7& 1503.8 & 7.93&& 1514.5 & 9.87&& 1340.4 & 8.73&& 1169.7 & 7.63&& 1006.0 & 6.55 \\ 
		8& 1536.3 & 10.65&& 1550.3 & 13.11&& 1370.3 & 11.59&& 1198.0 & 10.13&& 1029.3 & 8.71 \\ 
		9& 1504.3 & 13.15&& 1518.7 & 16.35&& 1342.4 & 14.44&& 1173.7 & 12.68&& 1007.8 & 10.89 \\ 
		10& 1572.3 & 17.19&& 1581.5 & 21.46&& 1400.6 & 18.96&& 1222.9 & 16.57&& 1051.1 & 14.25 \\ 
		\toprule
	\end{tabular}
\end{table}	

We can see from Table \ref{tab1} that our proposed TBDA with $\theta=1,2$ takes fewer iterations than the PDHG. Moreover,  TBDA with $\theta=2$ and the improved version ITBDA always works better than the PDHG in terms of iterations and computing time. Due to the randomness of the data, we further show the average performance and the corresponding standard deviation of the iterations and computing time in Figure \ref{Fig1}, which shows the stability of all algorithms for solving quadratic programming problems. The plots in Figure \ref{Fig1} demonstrate that the TBDA with $\theta=2$ and ITBDA perform a little more stable than the PDHG. Particularly, the ITBDA works more efficient than the others when the objective function possesses the strongly convexity.

\subsection{RPCA}\label{sec-rpca}
In this subsection, we show the applicability of Algorithm \ref{alg1} to multi-block separable convex optimization problems as discussed in Section \ref{Sec:MALM}. More concretely, we consider the classical RPCA problem, which reads as
\begin{equation}\label{RPCA}
\min_{X,Z}\; \left\{\;\| X \|_{*} + \lambda \| Z \|_{1} \; |\; X+Z=H \; \right\},  
\end{equation}
where $\|X\|_{*}$ is the nuclear norm (i.e., the sum of the singular values of $X$) and $\|Z\|_{1}$ represents the sum of the absolute value of $Z$'s elements, and $H \in \mathbb{R}^{m \times n}$ is a given matrix. Obviously, the RPCA model  \eqref{RPCA} can be reformulated as the following separable saddle point problem:
\begin{equation}\label{P2}
\min_{X,Z}\max_{Y}\; \left\{\;\L(X,Z,Y)=\| X \|_{*} + \lambda \| Z \|_{1} + \langle X+Z,Y \rangle - \langle H,Y \rangle\;\right\}.
\end{equation}
Given the complexity of \eqref{P2}, in this subsection, we compare TBDA with PDHG and SPIDA while also examining the sensitivity of TBDA to its parameters. In our experiments, we consider some synthetic and real datasets, respectively.

We first conduct the numerical performance of TBDA on synthetic datasets. To generate the data, we let $X^{\star} = UV$, where $U\in \mathbb{R}^{m \times r}$ and $V \in \mathbb{R}^{r \times n}$ are independent random matrices whose entries come from Gaussian distribution $\mathcal{N}(0, 1)$. Clearly, the preset $r$ controls the rank of $X^{\star}$ to satisfy the low-rank property. Then, to generate the sparse matrix $Z^{\star}$, we first randomly select a support set $\Omega$ of size $0.15 \times mn$ (i.e., $15\%$ non-zero components) so that all elements are independently sampled from a uniform distribution in $[-30, 30]$. Finally, we let $H = X^{\star} + Z^{\star}$ be the observation matrix. In this way, we can guarantee that $(X^{\star} , Z^{\star})$ is the optimal solution of \eqref{RPCA}. Here, when implementing the SPIDA, we take the Bregman functions $\psi(\cdot)=\frac{1}{2}\|\cdot\|_F^2$ for the $X$- and $Z$-subproblems, and $\phi(Y)=\frac{1}{2}\|Y\|_F^2$ the $Y$-subproblem. In our experiments, we take the same step size for PDHG and SPIDA, i.e., $(\gamma_{0},\mu_{0}) = (\|A\|, \|A\|)$. To investigate the sensitivity of TBDA's step sizes, we first fix $\sigma = 1$ and $\tau = \gamma$, then TBDA with $(\gamma,\mu)$ = $(p_{1}\gamma_0,p_{2}\mu_0)$ is denoted by ``TBDA$(p_{1},p_{2})$" for simplicity, where we consider three groups of $(p_1,p_2)$ with $\{(0.91,0.91),(0.83,1.00),(1.00,0.83)\}$. Besides, our experiments employ the following stopping criterion
\begin{equation}\label{eb}
		\text{Tol}:= \frac{\|(X^{k+1},Z^{k+1},Y^{k+1})-(X^k,Z^k,Y^k)\|_F}{\|(X^k,Z^k,Y^k)\|_F} \leq \epsilon
\end{equation} and set $\epsilon = 10^{-5}$ for all algorithms. In the numerical simulations, we consider four scenarios on $(m,n)=(256i, 512i)$ and $r = 0.15*256i$ (by Matlab script ``{\sffamily r = round(0.15*min(m,n))}'') with $i=1,2,3,4$. All results are summarized in Table \ref{tab5}, where we report the rank of the recovered low-rank matrix $\hat{X}$ (denoted by $\mathrm{rank}(\hat{X})$), the number of nonzero components of the obtained sparse matrix $\hat{Z}$ (denoted by $\|\hat{Z}\|_0$), and the relative error defined as
\begin{equation*}
	\text{\sffamily Rerr} = \frac{\| \hat{X} + \hat{Z} -X^{\star} - Z^{\star}\|_{F}}{\| X^{\star}+Z^{\star}\|_{F}},
	\end{equation*}
the number of iterations (Iter.), and computing time in seconds (Time). It can be seen from Table \ref{tab5} that out TBDA takes fewer iterations than PDHG and SPIDA to achieve almost the same approximate solutions. Moreover, with increasing problem size, the superiority of our TBDA becomes more evident. The results in Table \ref{tab5} verify that our TBDA is efficient for structured convex optimization problems.
\begin{table}[htbp]
	\caption{Numerical results of RPCA with synthetic data sets.}\label{tab5}
	\centering
	\footnotesize{\begin{tabular*}{\textwidth}{@{\extracolsep{\fill}}ccccccc}
			\toprule
			$(m,n,r)$ & Methods & $\text{rank}(\hat{X})$ & $ \| \hat{Z} \|_{0} $ & $ \text{\sf Rerr}$ & Iter. & Time \\ \toprule
			\multirow{5}{*}{\shortstack{  $(256,512,38)$}}
			& PDHG & 38 & 22835 & 1.0063$\times 10^{-4}$ & 702 & 6.67 \\ 
			& SPIDA & 38 & 22893 & 1.0057$\times 10^{-4}$ & 636 & 6.03 \\ 
			& TBDA(0.91,0.91) & 38 & 23278 & 1.1413$\times 10^{-4}$ & 598 & 5.77 \\ 
			& TBDA(0.83,1.00) & 38 & 24015 & 2.1350$\times 10^{-4}$ & 485 & 4.68 \\ 
			& TBDA(1.00,0.83) & 38 & 22898 & 8.5263$\times 10^{-5}$ & 630 & 6.05 \\ 
			\midrule 
			\multirow{5}{*}{\shortstack{  $(512,1024,77)$}}
			& PDHG & 77 & 79598 & 3.2149$\times 10^{-5}$ & 594 & 28.85 \\ 
			& SPIDA & 77 & 79689 & 3.2118$\times 10^{-5}$ & 538 & 26.15 \\ 
			& TBDA(0.91,0.91) & 77 & 80400 & 5.6643$\times 10^{-5}$ & 387 & 19.03 \\ 
			& TBDA(0.83,1.00) & 77 & 80692 & 5.6550$\times 10^{-5}$ & 421 & 20.84 \\ 
			& TBDA(1.00,0.83) & 77 & 80114 & 3.5288$\times 10^{-5}$ & 469 & 23.19 \\ 
			\midrule 
			\multirow{5}{*}{\shortstack{  $(1024,2048,154)$}}
			& PDHG & 154 & 314800 & 1.2050$\times 10^{-5}$ & 436 & 125.58 \\ 
			& SPIDA & 154 & 314853 & 1.2152$\times 10^{-5}$ & 391 & 112.82 \\ 
			& TBDA(0.91,0.91) & 154 & 315802 & 1.8001$\times 10^{-5}$ & 322 & 93.58 \\ 
			& TBDA(0.83,1.00) & 154 & 316057 & 1.5845$\times 10^{-5}$ & 363 & 105.60 \\ 
			& TBDA(1.00,0.83) & 154 & 315591 & 1.2596$\times 10^{-5}$ & 357 & 103.70 \\ 
			\midrule 
			\multirow{5}{*}{\shortstack{  $(1536,3072,230)$}}
			& PDHG & 230 & 707849 & 5.2862$\times 10^{-6}$ & 523 & 461.15 \\ 
			& SPIDA & 230 & 707912 & 5.4040$\times 10^{-6}$ & 457 & 403.08 \\ 
			& TBDA(0.91,0.91) & 230 & 708742 & 1.0542$\times 10^{-5}$ & 333 & 295.51 \\ 
			& TBDA(0.83,1.00) & 230 & 708898 & 1.0702$\times 10^{-5}$ & 334 & 296.52 \\ 
			& TBDA(1.00,0.83) & 230 & 708674 & 9.2882$\times 10^{-6}$ & 347 & 308.05 \\ 
			\bottomrule 
	\end{tabular*}}
\end{table}

All datasets in above experiments are synthetic with known optimal solutions. Below, we conduct the numerical performance of TBDA on real datasets. Recalling that the model \eqref{RPCA} has been widely applied to the separation of background from surveillance videos. Therefore, we here consider four video sets (i.e., (i) hall, (ii) highway, (iii) hall airport, and (iv) lobby). For illustrating experiments, we only select the first $300$ frames of each video to construct the observation matrix $H\in\mathbb{R}^{m\times 300}$, where $m = m_{1} \times m_{2}$ with $m_{1}$ and $m_{2}$ being the video's height and width, respectively. Note that the true rank and sparsity of these videos are unknown. We set $\epsilon = 5 \times 10^{-5}$ in \eqref{eb} as the stopping criterion in real datasets. Here, we follow the parameters' settings used in the above synthetic datasets for PDHG and SPIDA. Then, we further investigate the influence of the extrapolated parameter $\sigma$ on TBDA. Therefore, we take $\gamma =  \frac{2(1+a)^{2}}{3+6a}\gamma_{0}$, $\mu = \mu_{0}$ and $\tau =  \frac{4(1+a)^{2}}{3+6a}\tau_{0}$, where $\sigma=a$. Correspondingly, we denote TBDA($\sigma = a$) with $a=\{(1,2,3)\}$. Unlike the results in Table \ref{tab5}, we alternatively report the objective value ({\sf Obj.}) and the relative error ({\sf Err.}) defined by
\begin{equation*}
\text{\sf Err} = \frac{\| \hat{X} + \hat{Z} - H\|_{F}}{\| H \|_{F}}.
\end{equation*}

\begin{table}[htbp]
	\caption{Numerical results of RPCA with real-world data sets.}\label{tab6}
	\centering
	\small{\begin{tabular*}{\textwidth}{@{\extracolsep{\fill}}ccccccc}
			\toprule
			$(m,n)$ & Methods & {\sf Obj.}& {\sf Err.} & Iter. & Time \\ \toprule
			\multirow{6}{*}{\shortstack{ hall \\  $(101376,300)$}}
			& PDHG & 4756.4 & 5.8748$\times 10^{-3}$& 169 & 180.83 \\ 
			& SPIDA & 4765.2 & 5.6628$\times 10^{-3}$& 157 & 169.26 \\ 
			& TBDA($\sigma = 1$) & 4423.7 & 1.3201$\times 10^{-2}$ & 97 & 107.82 \\ 
			& TBDA($\sigma = 2$) & 4368.4 & 1.4921$\times 10^{-2}$ & 94 & 104.67 \\ 
			& TBDA($\sigma = 3$) & 4328.4 & 1.6409$\times 10^{-2}$ & 93 & 102.93 \\ 
			\midrule 
			\multirow{6}{*}{\shortstack{ highway \\  $(101376,300)$}}
			& PDHG & 5413.9 & 5.3725$\times 10^{-3}$& 135 & 145.73 \\ 
			& SPIDA & 5420.1 & 5.2151$\times 10^{-3}$& 125 & 134.26 \\ 
			& TBDA($\sigma = 1$) & 5210.2 & 9.7609$\times 10^{-3}$& 105 & 117.27 \\ 
			& TBDA($\sigma = 2$) & 5129.8 & 1.1819$\times 10^{-2}$ & 106 & 118.39 \\ 
			& TBDA($\sigma = 3$) & 5037.4 & 1.4467$\times 10^{-2}$ & 99 & 110.29 \\ 
			\midrule 
			\multirow{6}{*}{\shortstack{ hall airport \\  $(25344,300)$}}
			& PDHG & 2895.3 & 7.3628$\times 10^{-3}$& 218 & 51.58 \\ 
			& SPIDA & 2905.0 & 6.8465$\times 10^{-3}$& 211 & 49.21 \\ 
			& TBDA($\sigma = 1$) & 2741.9 & 1.6136$\times 10^{-2}$ & 123 & 30.05 \\ 
			& TBDA($\sigma = 2$) & 2712.3 & 1.8427$\times 10^{-2}$ & 128 & 31.22 \\ 
			& TBDA($\sigma = 3$) & 2693.7 & 2.0004$\times 10^{-2}$ & 141 & 34.29 \\ 
			\midrule 
			\multirow{6}{*}{\shortstack{ lobby \\  $(20480,300)$}}
			& PDHG & 1363.5 & 9.5948$\times 10^{-3}$& 385 & 76.27 \\ 
			& SPIDA & 1367.6 & 9.2571$\times 10^{-3}$& 361 & 70.76 \\ 
			& TBDA($\sigma = 1$) & 1210.8 & 2.4023$\times 10^{-2}$ & 121 & 24.79 \\ 
			& TBDA($\sigma = 2$) & 1209.7 & 2.4572$\times 10^{-2}$ & 139 & 28.01 \\ 
			& TBDA($\sigma = 3$) & 1187.8 & 2.9030$\times 10^{-2}$ & 117 & 23.80 \\ 
			\bottomrule 
	\end{tabular*}}
\end{table}
Obviously, we can see from Table \ref{tab6} that our TBDA always runs faster than the PDHG and SPIDA for the real datasets. Moreover, the extrapolation parameter $\sigma$ is beneficial for accelerating the TBDA, which is important for saving computational cost when dealing with large-scale problems.  In Figure \ref{Fig6}, we list some backgrounds and foregrounds of the videos separated by the PDHG and TBDA. Clearly, these results show that our TBDA is reliable for dealing with the real datasets. 
\begin{figure}[htbp]
	\centering
	\includegraphics[width=0.19\linewidth]{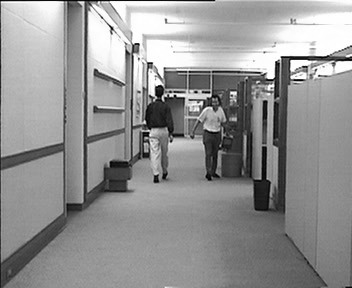}
	\includegraphics[width=0.19\linewidth]{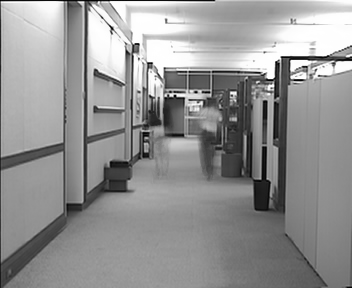}
	\includegraphics[width=0.19\linewidth]{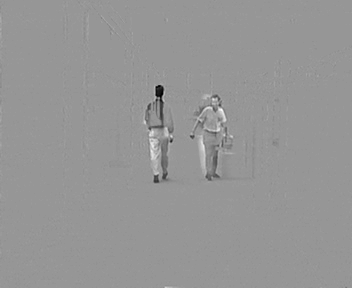}		
	\includegraphics[width=0.19\linewidth]{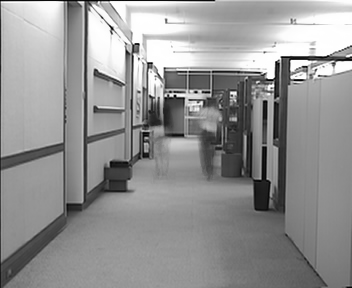}
	\includegraphics[width=0.19\linewidth]{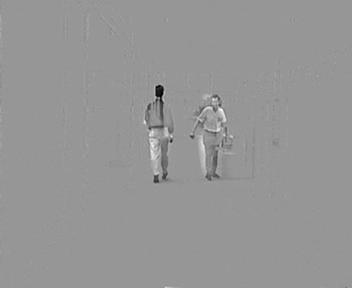}\\
		\includegraphics[width=0.19\linewidth]{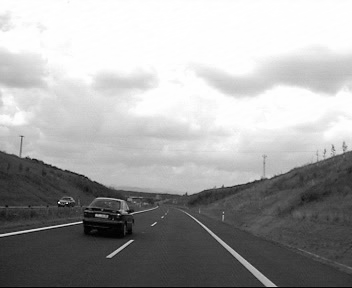}
	\includegraphics[width=0.19\linewidth]{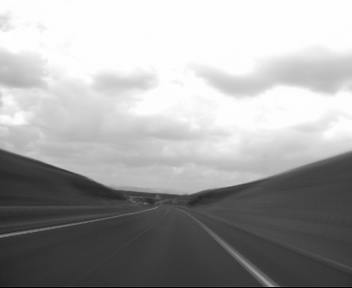}
	\includegraphics[width=0.19\linewidth]{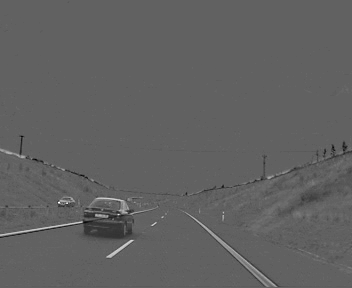}
	\includegraphics[width=0.19\linewidth]{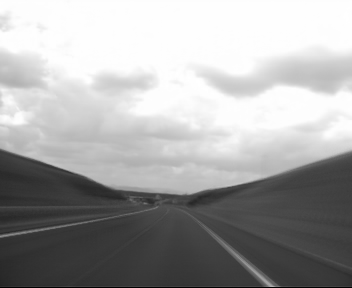}
	\includegraphics[width=0.19\linewidth]{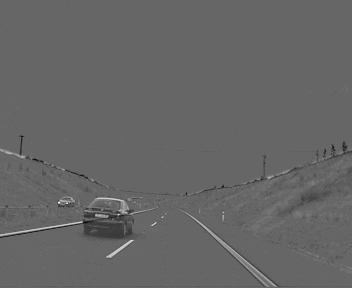}\\
	\includegraphics[width=0.19\linewidth]{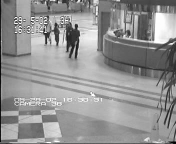}
	\includegraphics[width=0.19\linewidth]{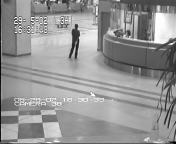}
	\includegraphics[width=0.19\linewidth]{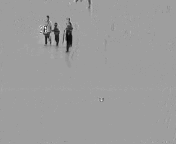}
	\includegraphics[width=0.19\linewidth]{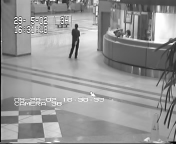}
	\includegraphics[width=0.19\linewidth]{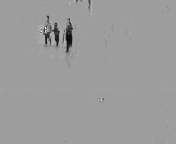}\\
	\includegraphics[width=0.19\linewidth]{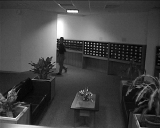}
	\includegraphics[width=0.19\linewidth]{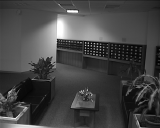}
	\includegraphics[width=0.19\linewidth]{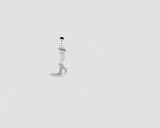}
	\includegraphics[width=0.19\linewidth]{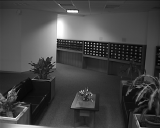}
	\includegraphics[width=0.19\linewidth]{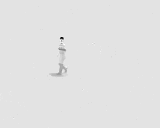}\\
	\caption{Results of the PDHG and TBDA for \eqref{RPCA} with application to real videos. From top to bottom: hall, highway, hall airport and lobby. The first column corresponds to one of the frames of each video. The second and third columns are the results obtained by the PDHG. The last two columns are the results obtained by our TBDA.}\label{Fig6}
\end{figure}

In summary, the results presented in this section demonstrate that our TBDA is both reliable and competitive for solving saddle point problems. Furthermore, they validate the core idea that incorporating an additional dual step and embedding distinct Bregman proximal terms into the subproblems can enhance the performance of primal-dual algorithms.

\section{Conclusions}\label{Sec:conclusion}
In this paper, we introduced a Triple-Bregman balanced primal-Dual Algorithm (TBDA) for solving a class of structured saddle point problems. The proposed TBDA offers a flexible and user-friendly algorithmic framework that not only help us understand the iterative schemes of some augmented Lagrangian-based algorithms, but also enables the design of customized variants with larger step sizes through appropriate choices of Bregman kernel functions and extrapolation parameters. Under relatively strong assumptions on the objective functions, we further developed two improved versions of TBDA. Computational results showed that TBDA outperforms the benchmark PDHG on both synthetic and real datasets, particularly when the dual subproblem is easier to solve than the primal one. 


\providecommand{\bysame}{\leavevmode\hbox to3em{\hrulefill}\thinspace}
\providecommand{\MR}{\relax\ifhmode\unskip\space\fi MR }
\providecommand{\MRhref}[2]{%
	\href{http://www.ams.org/mathscinet-getitem?mr=#1}{#2}
}
\providecommand{\href}[2]{#2}

\end{document}